\theoremstyle{plain}
\newtheorem{theorem}{Theorem}[section]
\newtheorem{corollary}[theorem]{Corollary}
\newtheorem{proposition}[theorem]{Proposition}
\newtheorem{lemma}[theorem]{Lemma}
\theoremstyle{remark}
\newtheorem{remark}[theorem]{Remark}
\newtheorem{definition}[theorem]{Definition}
\newtheorem{example}[theorem]{Example}
\newcommand{\N}{\ensuremath{\mathbb{N}}}
\newcommand{\ii}{\ensuremath{\bold{a}}}
\newcommand{\I}{\ensuremath{\mathfrak{I}}}
\newcommand{\HH}{\mathcal{H}}
\newcommand{\rst}[1]{\ensuremath{{\mathbin |}\raisebox{-.5ex}{$#1$}}}  
\DeclareMathOperator{\diam}{diam}
\begin{document}
\title[ On $\left(\alpha_n \right)$-regular sets]{On $\left( \alpha_n \right)$-regular sets}
\date{\today}
\author[T. Ojala]{Tuomo Ojala}
\address{Department of Mathematics and Statistics\\
University of Jyv\"askyl\"a \\
P.O. Box 35 (MaD), FI-40014, Finland }
\email{tuomo.j.ojala@jyu.fi}
\begin{abstract}
We define $(\alpha_n)$ -regular sets in uniformly perfect metric spaces. 
This definition is quasisymmetrically invariant and the construction resembles generalized dyadic cubes in metric spaces.
For these sets we then determine the necessary and sufficient conditions to be fat (or thin). In addition we discuss
restrictions of doubling measures to these sets, and, in particular, give a sufficient condition to retain 
at least some of the restricted measures doubling on the set.
Our main result generalizes and extends analogous results that were previously known to hold on the real line.

\end{abstract}
\subjclass[2010]{Primary 28A12. Secondary 30L10.}

\keywords{Doubling measure, Quasisymmetric map, thin set, fat set, Cantor set, dyadic cube}

\maketitle

\section{Introduction}
In this paper we discuss the size of sets in terms of doubling measures. A 
Borel regular (outer -) measure $\mu$ on a metric space $X$ is called \emph{doubling 
(with constant $C$)} if there exists a constant $C\geq 1$ such that
\begin{equation*}
  0 < \mu(B(x, 2 r)) \leq C \mu(B(x,r)) < \infty
\end{equation*}
for all balls $B(x,r) = \left\{ y \in X : d(x,y) < r \right\}$. 
Related to this, a metric space is called \emph{doubling}, if there exists a constant $N$ such that 
any ball of radius $r$ can be covered by $N$ balls of radius $\frac{1}{2}r$. A metric space that carries a doubling measure 
is doubling by a simple volume argument, and by well known results \cite{VolbergKonyagin1987} and \cite{LuukkainenSaksman1998} also the converse is 
true in complete spaces.

 We call a subset $E$ of a metric space $X$ \emph{thin} if it has zero measure with respect to all doubling measures and \emph{fat}
 if it has positive measure for all doubling measures of $X$. In literature fat sets have also been termed quasisymmetrically thick \cite{Heinonen}, thick \cite{Hanetal2009} and very fat \cite{BuHaMac,WangWenWen2013}. 
 Thin sets, on the other hand, have also been called quasisymmetrically null \cite{StaplesWard},
 null for doubling measures \cite{Wu} and very thin \cite{WangWenWen2013}. 

 Sets with nonempty interior are fat and countable sets with no isolated points are thin. Thus, the interest
 lies in uncountable sets without interior. Symmetric Cantor sets that are constructed from the unit interval by removing 
 the middle segment of relative length $\alpha_n$ from each line segment of the construction level $n$ offer 
 an example of sets 
 of this kind.
 For these Cantor sets there is a complete characterization in terms of the defining sequence. Let us denote 
 \begin{equation*}
   \ell^p := \left\{ (\alpha_{n})_{n=1}^{\infty} : 0 < \alpha_n < 1 \text{ and } \sum_{n=1}^{\infty} \alpha_n^{p} < \infty \right\}
 \end{equation*}
 and 
 \begin{equation*}
   \ell^0 := \bigcap_{0 < p} \ell^p, \quad  \ell^{\infty} := \bigcup_{0 < p} \ell^p.   
 \end{equation*}
 It is known that a symmetric Cantor set $C(\alpha_n)$ with defining sequence $(\alpha_n)$ is fat if and only if $(\alpha_n) \in \ell^0$
 and thin if and only if $(\alpha_n) \notin \ell^{\infty}$ (see \cite{Wu}, \cite{StaplesWard} and \cite{BuHaMac}).
 These results have also been generalized to \emph{nice $(\alpha_n)$-regular Cantor sets} and \emph{uniform Cantor sets} 
 of the real line
 (see \cite{CsoSuom}, \cite{Hanetal2009}, \cite{PengWen2011} and \cite{WangWenWen2013} for more precise definitions 
 and results).
 
 In more general metric spaces there are notions of $(\alpha_n)$ -thick (and -porous) sets. 
 For $(\alpha_n)$-thick sets it is known that 
 $(\alpha_n) \in \ell^0$ implies fatness and 
 for $(\alpha_n)$-porous sets $(\alpha_n) \notin \ell^{\infty}$ implies thinness (see \cite{CsoSuom} and \cite{OjRajSuo}).
 Our aim is now to give the missing parts of the characterizations of fatness/thinnes in terms of the defining sequence,
 for a natural class of Cantor type sets that generalize the symmetric Cantor sets of the real-line to more general spaces.
 For this, we will determine a class of sets that contain enough regularity for us to 
 work with. In addition, we will assume only a slight regularity of the space that we will work in.

 Throughout the whole paper we will be working in uniformly perfect metric spaces.
Recall that a metric space $X$ is called \emph{uniformly perfect (with constant $D$)},
if it is not a singleton and if there exists a constant $D \geq 1$
such that
\[
X \setminus B(x,r) \neq \emptyset \Rightarrow B(x,r) \setminus B(x,r/D) \neq \emptyset
\]
for all $x\in X$ and $r>0$. Recall also that in a uniformly perfect space, the diameter of a ball, $\diam B(x,r)$, 
and the radius $r$
are comparable: $r/D \leq \diam B(x,r) \leq 2 r$.

A metric measure space  $(X,\mu)$ is called \emph{Ahlfors ($q$-) regular} if there exist
constants $C\geq1$ and $0<q<\infty$ such that
\begin{equation}
  \frac{1}{C} r^q \leq \mu(B(x,r)) \leq C r^q
  \label{eqn:ahlforsdefinition}
\end{equation}
for all $r>0$ and $x \in X$. The measure $\mu$ is then also called \emph{Ahlfors regular} and it is comparable with the
$q$-dimensional Hausdorff 
measure.
In particular, the space $X$ has Hausdorff dimension $q$ and 
is uniformly perfect.
Later we will be denoting an Ahlfors regular measure by $\HH$, but we are not requiring that it has to be the 
Hausdorff measure.

Let us next recall the connection of doubling measures to quasisymmetric maps. A homeomorphism $f$
between two metric spaces $(X,d_X)$ and $(Y,d_Y)$ is \emph{($\eta$-) quasisymmetric} 
if there exists a homeomorphism 
$\eta \colon [0,\infty) \to [0,\infty)$ so that for any three distinct points $x,y,z \in X$ we have
\[
 \frac{d_Y(f(x),f(y))}{d_Y(f(x),f(z))} \le \eta\left(\frac{d_X(x,y)}{d_X(x,z)}\right).
 \]
 It follows from the definion that the quasisymmetric image of a uniformly perfect metric space is uniformly perfect,
 the inverse of a quasisymmetric function is quasisymmetric, 
 and the pullback of a doubling measure under a quasisymmetric function is doubling (see, for example, \cite{Heinonen}).


To state our main theorem we will give the following definition. 
The sets $Q_{n,j}$ below can be thought of as generalized cubes in a metric space. 
However, these are not necessarily dyadic cubes as discussed in \cite{HytKai,KaeSuoRaj,Christ}.
It is clear that the system of cubes that we describe is more general and does not possess all the
properties of dyadic cubes. The main issue is that unless the sequence $(\alpha_n)$ is constant,
the system in a sense does not have all the scales of dyadic cubes. 
This ``nonexistence'' of all the dyadic levels allows much more flexibility, as in Example \ref{ex:nondoubling}.

%

\begin{definition}
  Given a sequence $(\alpha_n),$ $0<\alpha_n<1, \; n \in \N$, we call a set $E$ \emph{$(\alpha_n)$-regular} if it 
  satisfies the following conditions:

  We have a collection of Borel sets $\left\{Q_{k,i}, k\in\N , i\in N_k \subset \N\right\} $
  and constants $d\leq 1,C_1 \geq 1,C_2 \geq 1$ such that 
\newcounter{toinen}
\begin{enumerate}
    \renewcommand{\theenumi}{(\Roman{enumi})}
    \renewcommand{\labelenumi}{\theenumi}
\item $X = \cup_{i\in N_k} Q_{k,i}$ (disjoint union) for every $k \in \N
  \label{list:cubespartition}
  $ 
\item  $Q_{k,i} \cap Q_{m,j} = \emptyset$ or $Q_{k,i} \subset Q_{m,j}$, for all $i \in N_k,j \in N_m$ and $k\geq m$, \label{list:cubesdisjointandchild}
\item For every $k\in \N$ and $i \in N_k$ 
  there exists a point $x_{k,i} \in X$ and radius $0 < r_{k,i} < \infty$ such that 
  \begin{equation*}
    B(x_{k,i}, r_{k,i}) \subset Q_{k,i} \subset B(x_{k,i},C_1 r_{k,i}),
    \label{eqn:cubesinballs}
  \end{equation*}
  \label{list:cubesinballs}
\item $ x_{n,i} \in Q_{n+1,j} \Rightarrow r_{n,i} \frac{1}{C_2} \alpha_n^{1/d} \leq r_{n+1,j}  \leq C_2 \alpha_n^d r_{n,i} $
  and $Q_{n,i} \setminus Q_{n+1,j} \neq \emptyset$,
  \label{list:centercubealpha}
\item For any $T>1$ there exists $1 \leq C_3 =C_3(T) < \infty$ such that
  \begin{equation*}
    Q_{n,i} \cap B(x_{n,j}, T r_{n,j})\neq \emptyset \Rightarrow \frac{1}{C_3} r_{n,j} \leq r_{n,i} \leq C_3 r_{n,j}.
  \end{equation*}
  \label{list:comparableradius}
\item $E:=\cap_n E_{n}$, where  $E_n:=\cup_{ Q_{n,i} } \left\{ Q_{n,i}\setminus Q_{n+1,j}: x_{n,i} \in Q_{n+1,j} \right\}$.
  \label{list:EisCubesminuscubes}
\end{enumerate}
\label{def:cubes}
\end{definition}

It should be noticed that this definition
includes Sierpinski carpets in higher dimensional Euclidean spaces.
Similarly to the Cantor sets $C(\alpha_n)$ that were mentioned earlier, Sierpinski 
carpets $S_{\ii}$ can be constructed according to the sequence of reciprocals of odd numbers
$\ii = (\frac{1}{a_{n}}), \; a_n \in \left\{ 3,5,7,\dots \right\}$.
The construction in question undergoes by dividing each of the level $n$ squares into $a_n^2$ subsquares 
in an obvious manner and removing 
the middle square of the level $n+1$ from each of the level $n$ squares.
In \cite{MacTysWil} these carpets were studied in connection with Poincar\'e inequalities.
Mackay et al. showed that $(S_{\ii},d,\mu)$ supports
a $p$-Poincar\'e inequality for $p>1$ if and only if $\ii \in \ell^2$. Here $d$ is Euclidean metric and
$\mu$ weak* limit of normalized Lebesgue measures on the pre-carpets, which in the case $\ii \in \ell^2$
is comparable to the restriction of the Lebesgue measure to $S_{\ii}$.

As a simple corollary of our main result (see Theorem \ref{thm:maintheorem})  
we see, in particular, 
that if $\ii \in \ell^{\infty}$, then $S_{\ii}$ will be of positive measure for some doubling measure of
the plane. We will prove that when restricted to the space $(S_\ii)$, these measures that we construct are also doubling 
as measures on the space $S_{\ii}$.
This is not true in the case of a general $(\alpha_n)$-regular set, and we give a counterexample of this. 
We also give sufficient condition for the construction of the set $E$, to guarantee that restrictions of our 
constructed measures will be
doubling measures on $E$.
Essentially what is required for this to be true is some quantified plumpness (see \cite{HytKai}) 
of the cubes in our construction. This discussion will be continued and made precise in the last section,
where also the above mentioned
example and results are provided.

Our $(\alpha_n)$-regular sets are slightly different from the nice $(\alpha_n)$-regular sets considered on the 
real line in \cite{CsoSuom}. Neither of the classes is contained in the other. 
One advantage of our definition (besides that it applies in very general metric spaces), 
is it's quasisymmetric invariance. We will prove the invariance in Lemma \ref{le:alpharegularQSinv}.

Let us next state our main theorem.

\begin{theorem}
 Let $E$ be an $(\alpha_n)$ -regular set  
 in a complete, doubling, uniformly perfect metric space $X$. Then $E$ is fat if and only if 
 $(\alpha_n) \in \ell^0$ and thin if and only if $(\alpha_n) \notin \ell^\infty$.
  \label{thm:maintheorem}
\end{theorem}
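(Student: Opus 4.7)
The theorem is a conjunction of four implications, which I would split into two groups of two. The forward implications $(\alpha_n) \in \ell^0 \Rightarrow E$ fat and $(\alpha_n) \notin \ell^\infty \Rightarrow E$ thin should follow from the known results in \cite{CsoSuom,OjRajSuo} on $(\alpha_n)$-thick and $(\alpha_n)$-porous sets. The cube axioms (III)--(V) are designed so that the middle child $Q_{n+1,j}$ of $Q_{n,i}$ (the child containing $x_{n,i}$) creates a hole inside $Q_{n,i}$ of relative radius comparable to $\alpha_n^{1/d}$ and leaves a surviving shell that still contains balls of comparatively large relative radius; this realises $E$ as both $(\alpha_n^{1/d})$-porous and $(\alpha_n^{1/d})$-thick. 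Since membership in $\ell^0$ or $\ell^\infty$ is invariant under taking a fixed positive power of the sequence, the two cited implications apply directly.

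The substantive content is in constructing witnessing doubling measures for the converses. Here the plan is to use the quasisymmetric invariance of $(\alpha_n)$-regularity (Lemma \ref{le:alpharegularQSinv}) together with an explicit reweighting of the cube system. Starting from an auxiliary doubling measure $\mu_0$ on $X$, which exists by Volberg-Konyagin and Luukkainen-Saksman, I would define a new Borel measure $\mu$ by prescribing, for each $n$ and each $Q_{n,i}$, a distribution of $\mu(Q_{n,i})$ among its children: the middle child $Q_{n+1,j}$ receives a fraction $f_n$ (a power of $\alpha_n$ times a constant) of the parent's mass, and the boundary children split the remaining $1-f_n$ in proportion to their $\mu_0$-masses. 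Then $\mu(E)$ is comparable to the infinite product $\prod_n(1-f_n)$ along any surviving branch. Choosing $f_n \approx \alpha_n^t$ for $t$ with $\sum \alpha_n^t < \infty$ (possible when $(\alpha_n) \in \ell^\infty$) gives a convergent product and hence $\mu(E) > 0$; choosing $f_n \approx \alpha_n^p$ for $p$ with $\sum \alpha_n^p = \infty$ (possible when $(\alpha_n) \notin \ell^0$) yields $\mu(E) = 0$ by the divergence of the product.

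The main obstacle is to prove that the reweighted measure $\mu$ is doubling on $X$. This is where the full strength of the cube axioms is needed. For a ball $B(x,r)$ with $r$ comparable to some $r_{n,i}$, condition (V) restricts the relevant same-level cubes to a bounded-cardinality family of comparable radius, and the doubling ratio reduces to a bounded product of consecutive fractions $f_n$, each controlled by the bilateral estimate in (IV). The more delicate case is intermediate radii between two cube scales, where one must bound the mass of deep descendants inside a very thin middle cube against the parent's mass; here the summability of $\alpha_n^t$ (or divergence of $\alpha_n^p$) interacts with the cube geometry to prevent cascades from destroying doubling. Once $\mu$ is shown to be doubling with constants depending only on $C_1, C_2, C_3, d$, and the doubling constant of $\mu_0$, the infinite-product calculations above complete both converse implications and hence the theorem.
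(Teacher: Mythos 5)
Your handling of implications (i) and (ii) matches the paper: both follow from the cited results once one observes that the lower bound in (IV) makes $E$ a $(\tfrac{1}{C_2}\alpha_n^{1/d})$-porous set and the upper bound makes it $(\alpha_n^{d})$-thick, and that $\ell^0$ and $\ell^\infty$ are stable under raising the sequence to a fixed positive power. Your overall architecture for (iii) and (iv) -- reduce by quasisymmetric invariance, reweight the reference measure cube by cube, and read off $\mu(E)$ as an infinite product whose convergence or divergence is governed by $\sum\alpha_n^t$ -- is also the paper's architecture, and your final bookkeeping ($\mu(E)\asymp\prod_n(1-f_n)$ with $f_n\approx\alpha_n^{(q+\rho)d}$) agrees with the computation in \eqref{eqn:Qn+1AprByQn}--\eqref{eqn:finalstep1}.

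The gap is in the construction of the witnessing measure itself. Assigning the middle child a \emph{flat} fraction $f_n\approx\alpha_n^t$ of the parent's mass and splitting the remainder among the other children in proportion to $\mu_0$ produces a density with a jump of unbounded size across the boundary of the middle child, and this destroys doubling at a \emph{single} scale, not through a cascade. Concretely: by (IV) the middle child $Q_{n+1,j}$ has radius $r_{n+1,j}\lesssim\alpha_n^{d}r_{n,i}$, and by (V) its sibling cubes meeting $B(x_{n+1,j},2r_{n+1,j})$ have comparable radii; under your rule those siblings carry mass comparable to their $\mu_0$-share, roughly $\alpha_n^{q'}\mu(Q_{n,i})$ for a geometric exponent $q'$ determined by $q$ and $d$, while $B(x_{n+1,j},r_{n+1,j})$ carries only $\alpha_n^{t}\mu(Q_{n,i})$. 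When $(\alpha_n)\in\ell^\infty\setminus\ell^{q'}$ you are forced to take $t>q'$, and then the ratio $\nu(B(x_{n+1,j},2r_{n+1,j}))/\nu(B(x_{n+1,j},r_{n+1,j}))\gtrsim\alpha_n^{q'-t}\to\infty$. Your remark that ``the summability of $\alpha_n^t$ interacts with the cube geometry to prevent cascades'' does not address this, because the failure is local and level-by-level. The missing idea, which is the technical core of the paper, is to replace the flat fraction by a continuously varying radial density: the paper modifies $\HH$ only on a central region $\I_{n,j}\subset B(x_{n,j},\tfrac12 r_{n,j})$ (a fixed fraction of the \emph{parent}, not of the child) by the weight $A_{n,j}\,d(x_{n,j},\cdot)^{\rho}$ with $\rho>-q$, so that a ball $B(x_{n,j},s)$ has mass $\approx A_{n,j}s^{q+\rho}$ at every intermediate scale $s$ between $r_{n+1,j}$ and $r_{n,j}$; this interpolation is exactly what makes the one-step measures $\theta_n$ doubling with a constant independent of $n$ (Lemmas \ref{le:centeredballsdoubling} and \ref{le:weightdoubling}), after which the averaged weights $t_n$ and the product $K_n$ give uniformly doubling $\nu_n$ and a doubling weak* limit. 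Without some such graded profile near each cube center, the converse implications do not go through as you have set them up.
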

To prove Theorem \ref{thm:maintheorem}, we need to prove four implications:
\begin{enumerate}
    \renewcommand{\theenumi}{(\roman{enumi})}
    \renewcommand{\labelenumi}{\theenumi}
\item $(\alpha_n) \in \ell^0 \Rightarrow E$ is fat. \label{list:Efat}
\item $(\alpha_n) \notin \ell^{\infty} \Rightarrow E $ is thin.\label{list:Ethin}
\item $(\alpha_n) \notin \ell^0 \Rightarrow$ there exists a doubling measure $ \mu$ such that $\mu(E)=0$ \label{list:Enotfat}
\item $(\alpha_n) \in \ell^{\infty} \Rightarrow$ there exists a doubling measure $\nu$ such that $\nu(E) > 0$ 
  \label{list:Enotthin}
\end{enumerate}
Implications \ref{list:Efat} and \ref{list:Ethin} follow from \cite[Lemma 4.1]{CsoSuom} and \cite[Theorem 3.2]{OjRajSuo}
since an $(\alpha_n)$ -regular set as defined above is $\left( \frac{1}{C_2} \alpha_n^{1/d} \right)$-porous  
and $\left( \alpha_n^{d} \right)$-thick as defined in \cite{CsoSuom} and \cite{OjRajSuo}.
For the other two implications, the results of this type are only known on the real line. As already mentioned, 
the latest such results are
\cite[Theorem 1.1 and Corollary 2.3]{CsoSuom} and \cite[Theorem 1. and Theorem 2.]{Hanetal2009}, however,   
already in \cite{Heinonen} Heinonen asked to 
what extent the one dimensional results would have analogs in higher dimensions. Moreover, Wang et al. 
mention in \cite{WangWenWen2013} that, 
the question of which sets are thin or fat in higher dimensional Euclidean spaces is still open.
As a contribution to this question, they show that a product of $n$ uniform Cantor sets is fat 
if and only if each of the 
factors is fat and a product of $n$ sets is thin if and only if some of the factors is thin.


\begin{remark}[Existence of $\left( \alpha_n \right)$ -regular sets]
To motivate our result, we show that for a given sequence $(\alpha_n)$, where $\alpha_n \rightarrow 0$ as 
$n \rightarrow \infty$,
$(\alpha_n)$ -regular 
sets always exist in uniformly perfect metric space.
For this we use the generalized dyadic cubes constructed in \cite{KaeSuoRaj}.
Note that these exist in any doubling metric space.
The construction in \cite{KaeSuoRaj} yields for any $0<r<1/3$ a collection 
$\left\{Q_{i}^k, k\in\N , i\in N_k \subset \N\right\} $ of Borel sets with the following properties:

\begin{list}{\Roman{toinen}:}{\usecounter{toinen} }
\item $X = \cup_{i\in N_k} Q_{i}^k$, for every $k$, and $Q_{i}^{k} \cap Q_{j}^{k} = \emptyset$ for all $k$ and $i \neq j$,
\item $Q_{i}^n \cap Q_{j}^m = \emptyset$ or $Q_{i}^k \subset Q_{j}^m$, when $k,m \in \N, k\geq m, i\in \N_k, j\in N_m$,
\item for every $k\in \N$ and $i \in N_k$ there exists a point $x_{i}^k \in X$ such that 
  \begin{equation*}
    U(x_{k,i}, c r^k) \subset Q_{i}^k \subset B(x_{i}^k,C r^k),
    \label{eqn:cubesinballsKRS}
  \end{equation*}
  where $c=1/2- \frac{r}{1-r}$ and $C=\frac{1}{1-r}$.
\item $\left\{ x_{i}^k : i\in N_k \right\} \subset \left\{ x_{i}^{k+1} : i \in N_{k+1} \right\}$ for all $k \in \N$.
\end{list}

Let $X$ be a uniformly perfect metric space (with constant $D$) and let
sequence $(\alpha_n)$, where $\alpha_n \rightarrow 0$ as $n \rightarrow \infty$,  be given. 
For each $n \in \N$ we choose subcollections $\left\{ Q_{i}^{k_n}: i\in N_{k_n} \right\}$ corresponding to $k_n$ such
that $\frac{r^{k_{n+1}}}{r^{k_{n}}} \approx \alpha_n$, for all $n$. 
This means simply choosing $k_{n+1}:=\lceil \log_r \alpha_n + k_n \rceil$ inductively.
Now we are ready to set $Q_{n,i} := Q_{i}^{k_n}$, $N_n := N_{k_n}$, $x_{n,i}= :x_{k_n,i}$ and $r_{n,i} := \frac{1}{3} r^{k_n}$ 
with for example $r=1/7$ along with constants $d:=1$,$ C_1=6, C_2=3, C_3(T)=1$. 
To achieve the second condition in \ref{list:centercubealpha}, we renumber the indices starting from such large $N$ that 
$D C_1 C_2 \alpha_n < 1$ and $\diam(X)> 2 r_{n,j}$, for all $n\geq N$. Finally, we set
$E:=\cap_{n} E_n$,
where $E_n:= \cup_i \left\{ Q_{n,i} \setminus Q_{n+1,j} : x_{n,i} \in Q_{n+1,j} \right\}$ 
to get an $(\alpha_n)$ -regular set.
\end{remark}


\section{Quasisymmetric invariance}
Towards the proof of the implications \ref{list:Enotfat} and \ref{list:Enotthin} 
we first show that these statements are quasisymmetrically invariant.
Since doubling measures can be pushed forward (or pulled back) under quasisymmetric maps,
it is enough that we show the invariance of Definition \ref{def:cubes}.
\begin{lemma}
  If $X$ is a uniformly perfect metric space, $E \subset X$ is $(\alpha_n)$-regular and $f:X\to Y$ is $\eta$-quasisymmetric, then $f(E)$ is $(\alpha_n)$-regular.
  \label{le:alpharegularQSinv}
\end{lemma}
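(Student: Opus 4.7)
My plan is to transport the combinatorial and metric data defining $E$ under $f$. I set
$Q'_{k,i} := f(Q_{k,i})$, $x'_{k,i} := f(x_{k,i})$, and
\[
r'_{k,i} := \inf\{d_Y(f(x_{k,i}), f(y)) : y \in X \setminus Q_{k,i}\},
\]
re-indexing the levels if necessary so that $Q_{k,i}\subsetneq X$ throughout. Since $f$ is a homeomorphism, the set-theoretic axioms \ref{list:cubespartition}, \ref{list:cubesdisjointandchild}, and \ref{list:EisCubesminuscubes} transfer immediately from the corresponding properties of $E$.

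To verify \ref{list:cubesinballs}, I observe that for $z\in Q_{k,i}$ and $y\in X\setminus Q_{k,i}$ one has $d_X(x_{k,i},z)/d_X(x_{k,i},y)\leq C_1$, so $\eta$-quasisymmetry gives $d_Y(x'_{k,i},f(z))\leq \eta(C_1)\,d_Y(x'_{k,i},f(y))$. Taking the supremum over $z$ and the infimum over $y$ yields $f(Q_{k,i})\subset B(x'_{k,i},\eta(C_1)\,r'_{k,i})$, while $B(x'_{k,i},r'_{k,i})\subset f(Q_{k,i})$ is immediate from the definition of $r'_{k,i}$ and the bijectivity of $f$. So \ref{list:cubesinballs} holds with $C_1$ replaced by $\eta(C_1)$.

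For \ref{list:centercubealpha} and \ref{list:comparableradius}, the key tool is the standard fact (see e.g.\ \cite{Heinonen}) that a quasisymmetric map between uniformly perfect spaces may be assumed \emph{power} quasisymmetric: $\eta$ may be replaced by $\tilde\eta(t)=C\max(t^{\sigma},t^{1/\sigma})$ for some $\sigma\in(0,1]$, with a common $\sigma$ valid for both $f$ and $f^{-1}$. Combining \ref{list:cubesinballs} with uniform perfectness of $f(X)$ yields $r'_{k,i}\asymp\diam f(Q_{k,i})$, so I work with diameters. A standard three-point argument shows that for $A\subset B$ in $X$,
\[
\diam f(A)/\diam f(B)\leq C\,\tilde\eta(C'\diam A/\diam B).
\]
Applying this with $A=Q_{n+1,j}\subset B=Q_{n,i}$ (nested by \ref{list:cubesdisjointandchild}), and inserting the bounds of \ref{list:centercubealpha} in $X$ together with $\diam Q_{k,i}\asymp r_{k,i}$ from \ref{list:cubesinballs}, yields $r'_{n+1,j}\leq C''_2\,\alpha_n^{d\sigma}\,r'_{n,i}$; the same argument applied to $f^{-1}$ gives $r'_{n+1,j}\geq (1/C''_2)\,\alpha_n^{1/(d\sigma)}\,r'_{n,i}$. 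Setting $d':=d\sigma\leq 1$ proves \ref{list:centercubealpha}; the side condition $Q_{n,i}\setminus Q_{n+1,j}\neq\emptyset$ transfers through the bijection.

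For \ref{list:comparableradius}, given $z\in Q_{n,i}$ with $d_Y(f(x_{n,j}),f(z))<T r'_{n,j}$, uniform perfectness of $X$ produces $w_0\in X\setminus Q_{n,j}$ with $d_X(x_{n,j},w_0)$ comparable to $r_{n,j}$; since $d_Y(f(x_{n,j}),f(w_0))\geq r'_{n,j}$, applying $\tilde\eta$-quasisymmetry of $f^{-1}$ to the triple $f(x_{n,j}),f(z),f(w_0)$ gives $d_X(x_{n,j},z)\leq T' r_{n,j}$ for some $T'=T'(T)$, whence $Q_{n,i}\cap B(x_{n,j},T' r_{n,j})\neq\emptyset$. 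Property \ref{list:comparableradius} in $X$ then gives $r_{n,i}\asymp r_{n,j}$, and the diameter/power-QS comparison above transfers this to $r'_{n,i}\asymp r'_{n,j}$. The hard part throughout is the power-quasisymmetry step in \ref{list:centercubealpha}: matching the $\alpha_n^d$ and $\alpha_n^{1/d}$ exponents in the image requires the power form of $\tilde\eta$, and the degenerate cases (small levels with $Q_{k,i}=X$, producing the ``nearby witness'' $w_0$, etc.) need uniform control in $n$ via the re-indexing above.
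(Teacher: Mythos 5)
Your proposal is correct and follows essentially the same route as the paper: transport the cubes and centers under $f$, define the image radii as the distance from $f(x_{k,i})$ to $f(X\setminus Q_{k,i})$, use the power form of the control function to obtain the exponent $d\sigma$ in \ref{list:centercubealpha}, and pull the ball condition in \ref{list:comparableradius} back through $f^{-1}$ using a witness point outside $Q_{n,j}$ at distance comparable to $r_{n,j}$. The only step you compress is the final transfer $r_{n,i}\asymp r_{n,j}\Rightarrow r'_{n,i}\asymp r'_{n,j}$ for the (non-nested, merely neighboring) cubes in \ref{list:comparableradius}: the diameter comparison you quote applies to nested sets, so here one needs a short chaining argument through $d(f(x_{n,j}),f(x_{n,i}))$ together with the uniform-perfectness witness, which is exactly what the paper spells out but is routine with the tools you already have in hand.
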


We will employ a couple of well known results about quasisymmetric maps. 
These can be found, for example, from \cite[Proposition 10.8 and Theorem 11.3]{Heinonen}.

\begin{lemma}
  If $f:X \to Y$ is $\eta$-quasisymmetric and if $A \subset B \subset X$ are such that $0 < \diam A \leq \diam B < \infty$,
  then 
  \begin{equation}
    \frac{1}{2 \eta\left( \frac{\diam B}{\diam A} \right)} \leq \frac{\diam f(A)}{\diam f(B)} \leq 
    \eta\left( \frac{2 \diam A}{\diam B} \right).
    \label{eqn:diamLemmaHein}
  \end{equation}
  \label{lem:diamLemmaHein}
\end{lemma}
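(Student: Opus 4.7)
The plan is to establish both inequalities directly from the definition of $\eta$-quasisymmetry by making suitable choices of reference points in $A$ and $B$, using the sup definition of diameter (so picking points $\varepsilon$-close to realizing the diameter and then letting $\varepsilon \to 0$ at the end). Neither direction is deep; the content is really just to package the quasisymmetric inequality so that ratios of $d_X$-distances become ratios of diameters.

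For the upper bound, I would fix arbitrary $x, y \in A$, aiming to bound $d_Y(f(x),f(y))$ by $\eta(2 \diam A / \diam B) \cdot \diam f(B)$. Given $\varepsilon > 0$, choose $z_1, z_2 \in B$ with $d_X(z_1, z_2) > \diam B - \varepsilon$; by the triangle inequality at least one of them, call it $z$, satisfies $d_X(x, z) \ge (\diam B - \varepsilon)/2$. Applying the quasisymmetry inequality to the triple $(x, y, z)$ gives
\[
d_Y(f(x), f(y)) \le \eta\!\left(\frac{d_X(x,y)}{d_X(x,z)}\right) d_Y(f(x), f(z)) \le \eta\!\left(\frac{2 \diam A}{\diam B - \varepsilon}\right) \diam f(B),
\]
using monotonicity of $\eta$ and $d_Y(f(x),f(z)) \le \diam f(B)$. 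Taking the supremum over $x, y \in A$ and sending $\varepsilon \to 0$ (with continuity of $\eta$) yields the right-hand inequality.

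For the lower bound, I would show the equivalent statement $\diam f(B) \le 2 \eta(\diam B / \diam A) \diam f(A)$. The trick is to anchor at a point $w \in A$ that almost realizes $\diam A$: given $\varepsilon > 0$, choose $a_1, a_2 \in A$ with $d_X(a_1, a_2) > \diam A - \varepsilon$ and set $w := a_1$. For any $z \in B$, the quasisymmetric inequality applied to the triple $(w, z, a_2)$ gives
\[
d_Y(f(w), f(z)) \le \eta\!\left(\frac{d_X(w, z)}{d_X(w, a_2)}\right) d_Y(f(w), f(a_2)) \le \eta\!\left(\frac{\diam B}{\diam A - \varepsilon}\right) \diam f(A).
\]
Then for any two points $z_1, z_2 \in B$ the triangle inequality in $Y$ passing through $f(w)$ costs a factor of $2$, producing $\diam f(B) \le 2 \eta(\diam B/(\diam A - \varepsilon)) \diam f(A)$, and letting $\varepsilon \to 0$ completes the proof.

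The only mildly subtle point, which I would call out explicitly, is the requirement $0 < \diam A$ (so that the quotient $\diam B/\diam A$ in the argument of $\eta$ is defined) and the fact that we never divide by a zero distance: the near-extremizers $z$ and $a_2$ are chosen so that $d_X(x,z)$ and $d_X(w, a_2)$ are bounded below by positive quantities. Everything else is monotonicity of $\eta$ and the triangle inequality, so I do not anticipate a genuine obstacle.
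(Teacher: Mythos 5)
Your argument is correct and coincides with the standard proof: the paper itself gives no proof of Lemma \ref{lem:diamLemmaHein}, deferring to \cite[Proposition 10.8]{Heinonen}, and your scheme of anchoring at near-extremal points (a point $z\in B$ with $d_X(x,z)\geq(\diam B-\varepsilon)/2$ for the upper bound, a pair $a_1,a_2\in A$ nearly realizing $\diam A$ with a triangle inequality through $f(a_1)$ costing the factor $2$ for the lower bound) followed by $\varepsilon\to 0$ via continuity of $\eta$ is exactly the argument in that reference. The only detail left implicit, that the quasisymmetry condition is stated for three distinct points so the degenerate coincidences (e.g.\ $x=y$, $y=z$, or $z=a_2$) must be dispatched separately, is trivial and does not constitute a gap.
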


\begin{lemma}
  If $X$ is a uniformly perfect metric space and $f:X \to Y$ is quasisymmetric, then $f$ is $\eta$-quasisymmetric
  with $\eta$ of the form
  \begin{equation}
    \eta(t) = C \max\left\{ t^{\beta}, t^{1/\beta} \right\},
    \label{eqn:powerQS}
  \end{equation}
  \label{lem:powerQS}
\end{lemma}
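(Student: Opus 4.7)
The plan is to upgrade the given gauge $\eta$ to the stated power form by iterating the quasisymmetric inequality along a geometric chain of points, whose existence is furnished by uniform perfectness. Let $D$ denote the uniform perfectness constant of $X$; note that the image $Y = f(X)$ is automatically uniformly perfect as well, so the same machinery is available for the inverse map.

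Since $\eta$ is a homeomorphism of $[0,\infty)$ with $\eta(0)=0$, fix $s_0 \in (0,1)$ with $\eta(s_0) \leq 1/2$. Given distinct $x,y,z \in X$ with $t := d_X(x,y)/d_X(x,z) \leq s_0$, choose the integer $n$ with $(s_0/D)^{n+1} \leq t < (s_0/D)^n$ and set $r_k := d_X(x,z)\,(s_0/D)^k$ for $0\le k\le n$. Put $w_0 := z$ and inductively invoke uniform perfectness of $X$ at scale $r_k$ (the hypothesis $X\setminus B(x,r_k)\neq\emptyset$ is witnessed by $w_{k-1}$, since $d_X(x,w_{k-1}) \geq r_{k-1}/D > r_k$) to obtain points $w_k$ with $d_X(x,w_k) \in [r_k/D, r_k)$. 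A direct estimate then gives $d_X(x, w_{k+1})/d_X(x, w_k) < s_0$, so that
\begin{equation*}
  \frac{d_Y(f(x), f(w_{k+1}))}{d_Y(f(x), f(w_k))} \leq \eta(s_0) \leq \tfrac{1}{2}.
\end{equation*}
Telescoping, together with one final application of $\eta$ to the triple $(x,y,w_n)$ for which $d_X(x,y)/d_X(x,w_n)$ is controlled by a universal constant, yields $d_Y(f(x),f(y))/d_Y(f(x),f(z)) \leq C\cdot 2^{-n}$. Since $n \sim \log(1/t)/\log(D/s_0)$, this delivers $\eta(t) \leq C_1 t^{\beta_1}$ for all small $t$, with $\beta_1 := \log 2 / \log(D/s_0) > 0$.

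For large $t$, apply the identical chain argument to $g := f^{-1}\colon Y\to X$, yielding a bound $\eta_g(s) \leq C s^{\beta_2}$ for small $s$. Rewriting this by swapping the roles of the second and third points of a triple translates it to $\eta(t) \leq C_2 t^{1/\beta_2}$ for large $t$. Setting $\beta := \min(\beta_1,\beta_2) > 0$, absorbing the bounded middle range $t\in[s_0, 1/s_0]$ into the multiplicative constant (using continuity of $\eta$), and observing that $t^{\beta_1} \leq t^\beta$ for $t\leq 1$ while $t^{1/\beta_2} \leq t^{1/\beta}$ for $t\geq 1$, we obtain a single gauge of the claimed form $C\max\{t^\beta, t^{1/\beta}\}$.

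The main obstacle is the bookkeeping of the chain construction: one must arrange the scale ratio to be strictly smaller than $1/D$ (namely $s_0/D$) so that the successive distance ratios $d_X(x,w_{k+1})/d_X(x,w_k)$ are uniformly bounded by $s_0$, rather than merely by $1$ as a naive application of uniform perfectness would give. This is exactly where the initial continuity input $\eta(s_0)\leq 1/2$ is exploited. Once a chain with a genuine multiplicative gap is in place, the telescoping is routine, and the dualization to $f^{-1}$ rests only on the quasisymmetric invariance of uniform perfectness.
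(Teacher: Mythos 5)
Your argument is correct: the chain $w_0=z, w_1,\dots,w_n$ furnished by uniform perfectness at the scales $r_k = d_X(x,z)(s_0/D)^k$ does satisfy $d_X(x,w_{k+1})/d_X(x,w_k) < s_0$, so telescoping the factor $\eta(s_0)\le \tfrac12$ plus one application of $\eta$ at ratio $\le D$ gives the bound $C t^{\beta_1}$ for small $t$, and the dualization through $f^{-1}$ (legitimate since $Y$ is uniformly perfect and $f^{-1}$ is quasisymmetric) handles large $t$. Note that the paper offers no proof of this lemma at all --- it is quoted from Heinonen (Theorem 11.3) --- and your chaining-plus-duality argument is essentially the standard proof of that cited result, so your proposal matches the intended source rather than deviating from it.
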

where $C\geq 1$ and $\beta \in (0,1]$ only depend on $f$ and $X$.

\begin{proof}[Proof of Lemma \ref{le:alpharegularQSinv}]
    Let $Q_{n,j},r_{n,j},d,C_1,C_2,C_3$ be as in 
    Definition \ref{def:cubes} such that $E \subset X$ is $(\alpha_n)$-regular set.
  Let $f:X \to Y$ be $\eta$-quasisymmetric, where we can assume that $\eta(t)=C\max\left\{ t^{\beta}, t^{1/\beta} \right\}$
  by Lemma \ref{lem:powerQS}.
  Obviously, we choose sets $f(Q_{n,j})$ to be the Borel sets of Definition \ref{def:cubes}, which directly gives 
  the property \ref{list:EisCubesminuscubes}. 
  We now have to show that there exist constants such
  that the properties \ref{list:cubespartition} -- \ref{list:comparableradius} are satisfied for sets $f(Q_{n,j})$.

  Properties \ref{list:cubespartition} and \ref{list:cubesdisjointandchild} follow since $f$ is a homeomorphism. 
  For property \ref{list:cubesinballs} we notice that 
  \begin{equation*}
    \frac{d(x_{n,j},z)}{d(x_{n,j},y)} \leq \frac{C_1 r_{n,j}}{ r_{n,j}} \leq C_1, \text{ for all } 
    z \in Q_{n,j}, \; y \notin Q_{n,j},
  \end{equation*}
  which implies
  \begin{equation*}
    \sup_{z \in Q_{n,j}} d(f(x_{n,j}),f(z)) \leq \eta(C_1) \inf_{y \notin Q_{n,j}} d(f(x_{n,j}),f(y)),
  \end{equation*}
  and thus 
  \begin{equation*}
    B(f(x_{n,j}),R_{n,j}) \subset f(Q_{n,j}) \subset \bar{B}(f(x_{n,j}),\eta(C_1) R_{n,j}),  
  \end{equation*}
  where $R_{n,j}=\inf_{y \notin Q_{n,j}} d(f(x_{n,j}),f(y))$. Thus we can choose $f(x_{n,j})$,$R_{n,j}$
  , $2\eta(C_1)$ as $x_{n,j}, r_{n,j},  C_{1}$ in Definition \ref{def:cubes} for the set $f(E)$.
  These notions will be used throughout the rest of the proof.

  Rephrasing Lemma \ref{lem:diamLemmaHein} with this information for 
  $x_{n,i} \in Q_{n+1,j}$ and
  $R_{n,j}$ as above gives
  \begin{equation}
    \frac{1}{4 D' \eta\left( \frac{2 D r_{n,i}}{r_{n+1,j}} \right)} \leq \frac{R_{n+1,j}}{R_{n,i}} 
    \leq 2 D' \eta\left( \frac{4 D r_{n+1,j}}{r_{n,i}} \right),
    \label{eqn:reprhasediamLemmaHein}
  \end{equation}
  where $D$ and $D'$ are uniform perfectness constants of $X$ and $Y$ respectively. Remember that $D'$ only depends 
  on $D$ and the controlling homeomorphism $\eta$.
  Now the property \ref{list:centercubealpha} for the original radii $r_{n+1,j}$ and $r_{n,i}$ together with 
  Lemma \ref{lem:powerQS} gives
  \begin{equation*}
    \begin{array}{ll}
      \frac{1}{4 D' C \max\left\{ (2 D C_2)^{\beta}, (2 D C_2)^{1/\beta}\right\} }\alpha_n^{\frac{1}{d \beta}} &\\     
     \leq \frac{1}{4 D' \eta\left( 2 D C_2 \alpha_n^{-1/d} \right)} \leq \frac{R_{n+1,j}}{R_{n,i}} 
    \leq 2 D' \eta\left( 4 D C_2 \alpha_n^d \right) \\
    \leq 2 D' C \max\left\{ (4 D C_2)^\beta, (4 D C_2)^{1/\beta} \right\} \alpha_{n}^{d \beta},
  \end{array}
  \end{equation*}
  for $f(x_{n,i}) \in f(Q_{n+1,j})$. Since $f$ is a homeomorphism, $f(Q_{n,i}) \setminus f(Q_{n+1,j}) \neq \emptyset$ if 
  and only if $Q_{n,i} \setminus Q_{n+1,j} \neq \emptyset$.
  Thus, the property \ref{list:centercubealpha} is settled with $d \beta$ and $4 D' C (4 D C_2)^{1/\beta}$ as 
  constants $d$ and $C_2$.

For \ref{list:comparableradius} we note that it is equivalent with the condition
\begin{equation}
  \forall S >1, \exists C_{}(S) \text{ such, that } 
  Q_{n,i} \subset B(x_{n,j}, S r_{n,j}) \Rightarrow r_{n,i} \geq \frac{1}{C_{}(S)} r_{n,j}.
  \label{eqn:equivCond5}
\end{equation} Indeed, the implication $\ref{list:comparableradius} \Rightarrow \eqref{eqn:equivCond5}$ is trivial.
For $\eqref{eqn:equivCond5} \Rightarrow \ref{list:comparableradius}$ assume that \eqref{eqn:equivCond5} holds
and $Q_{n,i} \cap B(x_{n,j}, T r_{n,j}) \neq \emptyset$. If $r_{n,i} \leq r_{n,j}$, we have
$Q_{n,i} \subset B(x_{n,j},3 T r_{n,j})$, which now implies $r_{n,i} \geq \frac{1}{C(3 T)} r_{n,j}$.
If $r_{n,i}>r_{n,j}$, we reverse the roles of $x_{n,j}$ and $x_{n,i}$ and get $r_{n,j} \geq \frac{1}{C(3 T)} r_{n,i}$.

%

To prove that \ref{list:comparableradius} is quasisymmetrically invariant, we show that for any choice of $S > 1$, we can 
find constant $C(S)$ such that \eqref{eqn:equivCond5} holds on the image side. Let us denote by $\eta'$ the controlling homeomorphism of $f^{-1}$ and by $D'$ the uniform perfectness constant of the space $Y$. 
Let $f(Q_{n,i}) \subset B(f(x_{n,j}), S R_{n,j})$
and note that Lemma \ref{lem:diamLemmaHein} for $f^{-1}$
implies
\begin{equation}
  \frac{1}{2 \eta'\left(\frac{ \diam B(f(x_{n,j}),S  R_{n,j}) }
  {\diam B(f(x_{n,j}),R_{n,j})}\right)} \leq
  \frac{\diam f^{-1}(B(f(x_{n,j}),R_{n,j}))}{\diam f^{-1}(B(f(x_{n,j}),S R_{n,j}))}.
  \label{eqn:useOfdiamLemma}
\end{equation}
The uniform perfectness of $Y$ implies
\begin{equation}
  \frac{\diam B(f(x_{n,j}),S R_{n,j})}{\diam B\left( f(x_{n,j}),R_{n,j} \right)} \leq 
  2 S D',
  \label{eqn:impliOfUnifPerfY}
\end{equation}
and 
$  B(f(x_{n,j}),R_{n,j}) \subset f(Q_{n,j}) $
gives
\begin{equation}
  \diam f^{-1}(B(f(x_{n,j}),R_{n,j})) \leq \diam f^{-1}(f(Q_{n,j})) \leq 2 C_1 r_{n,j}.
  \label{eqn:almostTrivi}
\end{equation}
Now, together \eqref{eqn:useOfdiamLemma}, \eqref{eqn:impliOfUnifPerfY} and \eqref{eqn:almostTrivi} imply
\begin{equation}
  \diam f^{-1}(B(f(x_{n,j}),S  R_{n,j})) \leq 2 \eta'\left( 2 S D' \right) 2 C_1 r_{n,j}.
  \label{eqn:propertyVstart}
\end{equation}
Thus, let us choose
$T:=4 C_1 \eta'\left( 2 S D' \right) $ and 
note that with this choice
$Q_{n,i} \subset B(x_{n,j},T  r_{n,j})$. 
By the property \ref{list:comparableradius} for the original set $E$ (and thus by \eqref{eqn:equivCond5}) we have for $y \notin Q_{n,i}$
\begin{equation*}
  \frac{d(x_{n,i},x_{n,j})}{d(x_{n,i},y)} \leq \frac{T  r_{n,j}}{ r_{n,i}} \leq C_1 T C_{}(T),
\end{equation*}
and this again implies
\begin{equation}
  d(f(x_{n,j}),f(x_{n,i})) \leq \eta\left(C_1 T C_{}(T)\right) R_{n,i}.
  \label{eqn:neighborcubescompar1}
\end{equation}
Also for $x_{n,j} \neq x_{n,i} \in B(x_{n,j},T C_1 r_{n,j})$ and $z \in Q_{n,j}$ we have
\begin{equation*}
  \frac{d(x_{n,j},z)}{d(x_{n,j},x_{n,i})} \leq \frac{C_1 r_{n,j}}{r_{n,j}} = C_1,
\end{equation*}
which implies
\begin{equation}
  \sup_{z \in Q_{n,j}}d(f(x_{n,j}),f(z)) \leq \eta\left( C_1 \right) d(f(x_{n,j}),f(x_{n,i})).
  \label{eqn:neighborcubescompar2}
\end{equation}

We still note that by the uniform perfectness 
for any $n$ and $j$ there exist points $z \in Q_{n,j}$ and $y \notin Q_{n,j}$  such
that $d(x_{n,j},y) \leq 2 D d(x_{n,j},z)$.
This gives
  \begin{equation}
    R_{n,j} \leq \eta(2 D) \sup_{z\in Q_{n,j}} d(f(x_{n,j}),f(z))
  \label{eqn:ballandcomplementradius}
\end{equation}
for all $n$ and $j$.

Together \eqref{eqn:ballandcomplementradius}, \eqref{eqn:neighborcubescompar1} and \eqref{eqn:neighborcubescompar2} imply
\begin{equation}
  R_{n,j} \leq \eta(2 D) \eta\left( C_1 \right) \eta\left(C_1 T C_{}(T)\right) R_{n,i}
  \label{eqn:cubescompar3}
\end{equation}
for
$f(Q_{n,i}) \subset B(f(x_{n,j}), S  R_{n,j})$,
which proves the claim.
\end{proof}

Now we note that a complete, doubling and uniformly perfect metric space is quasisymmetrically homeomorphic with 
an Ahlfors regular space. This is because a complete and doubling space carries
a doubling measure (\cite{LuukkainenSaksman1998}), and 
a uniformly perfect metric space $X$ that carries a doubling measure
is quasisymmetrically homeomorphic with an Ahlfors regular space (\cite[Corollary 14.15]{Heinonen}).
Thus, we only need to prove
Theorem \ref{thm:maintheorem} in an Ahlfors regular space, and the rest follows from quasisymmetric invariance.

\section{Additional notation}

Since we are now in a position where we only need to prove Theorem \ref{thm:maintheorem} in an Ahlfors regular space, 
let us fix one. 
  From now on, assume that $(X,d,\HH)$ is a fixed Ahlfors $q$-regular metric space, with constant $C$ 
  (and with uniform perfectness constant $D$). 
We proceed in proving \ref{list:Enotfat} and \ref{list:Enotthin} by constructing doubling measures  $\nu$ and $\mu$ 
such that 
 $\nu(E)=0$ if $(\alpha_n) \notin \ell^0$ and $\mu(E)>0$ if $(\alpha_n) \in \ell^{\infty}$ .
  
  We shall construct the desired measures in a style that resembles the Riesz product 
  (see, for example, \cite[page 182]{DavidSemmes1997}), but respects in a natural manner the geometry of our
  $(\alpha_n)$-regular set.
In addition to Definition \ref{def:cubes},
we shall be using the following notation (see Figure \ref{pic:QandI}): 
\begin{equation*}
  \I_{n,j}:=\bigcup \left\{  Q_{n+1,i} : Q_{n+1,i} \subset B(x_{n,j}, \frac{1}{2} r_{n,j}) \right\},
\end{equation*}

\begin{equation*}
  \I_{n,j}^c:= \bigcup \left\{ Q_{n+1,i} \subset Q_{n,j}: Q_{n+1,i} \not\subset \I_{n,j} \right\},
\end{equation*}

\begin{equation*}
  A_{n,j} :=   \frac{\HH(\I_{n,j})}{\int_{\I_{n,j}} d(x_{n,j},y)^{\rho} d\HH(y)},
\end{equation*}
where $ \rho > -q$ will be fixed later, depending on the sequence $(\alpha_n)$. 
The constant $A_{n,j}$ is bounded in the following sense:
There exists a constant $0<C_4<\infty$ such that
\begin{eqnarray}
  \frac{1}{C_4} r_{n,j}^{-\rho } \leq A_{n,j} \leq C_4 r_{n,j}^{-\rho }
  \label{eqn:vakioidenAkertaluokka}
\end{eqnarray}
for any $n$ and $j\in N_n$, where $\alpha_n$ is sufficiently small so that $\I_{n,j} \neq  \emptyset$.
This can be easily verified with the help of Lemma \ref{le:biggerweightsconstant} and similar computations to those in 
\eqref{eqn:waytocomputemeasureofcenter1} and \eqref{eqn:waytocomputemeasureofcenter}.

Next, let us define
    \begin{equation*}
      y_n(x) := \left\{ \begin{array}{ll} 1 &, \text{ if } x \in Q_{n+1,i} \subset \I_{n,j}^c,  \\
	A_{n,j} d(x_{n,j},x)^{\rho} &, \text{ if } x \in Q_{n+1,i} \subset \I_{n,j}. 
  \end{array}
    \right.
\end{equation*}
This should be thought of as an analog to a Jacobian of a radial stretch in each cube of level $n$.
With this as a weight we define a measure $\theta_n$ by
\begin{equation*}
  d\theta_n(x) := y_n(x) d\HH(x).
  \label{eqn:thetaDefinition}
\end{equation*}
We will also average the weight $y_n$ over each cube of the subsequent level,
by setting
\begin{equation*}
  t_n(x) := \left\{ \begin{array}{ll} 1 &, \text{ if } x \in Q_{n+1,i} \subset \I_{n,j}^c,\\
   \frac{ A_{n,j} }{\HH(Q_{n+1,i})} \int_{Q_{n+1,i}} d(x_{n,j},y)^{\rho} d\HH(y)&, \text{ if } x \in Q_{n+1,i} \subset \I_{n,j}. 
  \end{array}
    \right.
\end{equation*}
Note first that $t_n$ is constant in each $Q_{n+1,i}$ and 
that 
\begin{equation} 
  \theta_n(Q_{n,i}) = \HH(Q_{n,i}), \text{ for every }   i\in N_n 
  \label{eqn:thetaEqualsHausd}
\end{equation} 
and  
  \begin{equation}
    t_{n}(x) \HH(Q_{n+1,j}) = \theta_{n}(Q_{n+1,j}), \text{ when } x \in Q_{n+1,j}.
    \label{eqn:connectionThetaMuHausd}
  \end{equation}
  
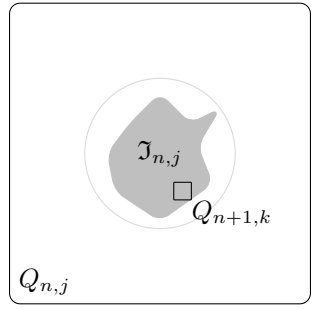
\begin{figure}[t]
\begin{tikzpicture}[rounded corners]
  \draw (-2,-2) rectangle (2,2);
  \node at (-2,-2) [above right] {$Q_{n,j}$};

\filldraw[color=gray!50] (.4,.4) -- (0,.8) -- (-.4,.4) -- (-.7,0) -- (-.6,-.5) -- (0,-.9) -- (.7,-.4) -- (.5,.1) -- (.8,.6) -- cycle;
  \draw[color=gray!30] (0,0) circle (1);
  \node at (0,0) {$\I_{n,j}$};
\node [rectangle, draw, minimum size=1pt,rounded corners=.3pt] at (.3,-.5) {};
\node [below,right] at (.3,-.8) {$Q_{n+1,k}$};

\end{tikzpicture}
  \caption{A cube $Q_{n,j}$ and set $\I_{n,j}$}
  \label{pic:QandI}
\end{figure}
In Lemma \ref{le:biggerweightsconstant} we will fix level $n_0$, which will be used as a starting point to 
mass distribution.
This is to assure that the size of cubes decreases sufficiently fast to guarantee the conclusions 
of Lemma \ref{le:biggerweightsconstant}. 
Finally,
for all $n\geq n_0 $ set 
\begin{equation}
  K_n(x) := \prod_{i=n_0}^n t_i(x),
\end{equation} 
  and with this as a weight we define a measure $\nu_n$ by
  \begin{equation} 
    d\nu_n(x) := K_n(x) d\HH(x).
    \label{eqn:nuDefinition}
  \end{equation}
  The connection between the measures $\theta_n$ and $\nu_n$ is clear from \eqref{eqn:connectionThetaMuHausd}:
  \begin{equation}
    \nu_n(Q_{n+1,j}) = K_{n-1}(x) \theta_n(Q_{n+1,j}), 
    \label{eqn:connectionThetaNu}
  \end{equation}
 for every  $j \in N_{n+1}$ and $x \in Q_{n,i} \supset Q_{n+1,j}$.

It should be understood that the purpose of the weight $K_n(x)$ is just 
to ``stretch'' the measure near the center of each cube, 
but in such a way that we can be sure to end up with a doubling
measure. 
We also have to be careful not to let the weight $K_n$ change too radically from one cube to another or on too many scales.
Otherwise we would blow up the chances to achieve a doubling measure.  
This is why we alter the original measure only in the sets $\I_{n,j}$. 
In the sets $\I_{n,j}^c$, which are close to boundaries of the cubes, the weight $t_n$ is constant
(see Figure \ref{pic:QandI}).

We still use one more notation. 
Define 
\begin{align}
  \label{eqn:INdefin}
   &IN(B(x,r),n):= \left\{  Q_{n,j}: Q_{n,j} \subset B(x,r) \right\}  \text{ and } \\
   \label{eqn:COVdefin}
   &COV(B(x,r),n):= \left\{  Q_{n,j}: Q_{n,j} \cap B(x,r) \neq \emptyset \right\}.
\end{align}
These will be used to approximate the balls $B(x,r)$ by finite unions of cubes from inside and outside, respectively.

  \section{Preliminary Lemmas}
  The claims \ref{list:Enotfat} and \ref{list:Enotthin} can both be proved in a similar manner,
  with obvious changes in certain inequalities. The idea is to use the sequence 
  $\nu_n$, defined by \eqref{eqn:nuDefinition}, and show that the weak* -limit of this 
  sequence will have the desired properties.

  The first two lemmas are technical ones. The first one simply
  fixes the level $n_0$, from where we start to redistribute the mass.

    \begin{lemma}
      There exists $n_0$, such that for all $n>n_0$ we have the following:
      \begin{enumerate}
        \item If $n$ is the first index for which  $IN(B(x,r),n) \neq \emptyset$, then
          for all $Q_{m,i} \cap B(x,2 r) \neq \emptyset, \;m \leq n-2$, we have $4 r \leq \frac{1}{2} r_{m,i}$.
        \item $B(x_{n,j},\frac{1}{16} r_{n,j}) \subset \I_{n,j}$.
        \item If $x\in Q_{n,j}$ and $r \leq \frac{1}{32 C_3(2 C_1)}r_{n,j}$, then $B(x,2 r) \cap \I_{n,i} = \emptyset$,
          for all $i \neq j$.
      \end{enumerate}
      \label{le:biggerweightsconstant}
    \end{lemma}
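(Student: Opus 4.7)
The plan is to choose $n_0$ so large that $\alpha_n$ is small for all $n > n_0$ (which is possible since the statement of the main theorem requires $(\alpha_n) \in \ell^\infty$, forcing $\alpha_n \to 0$), and then deduce each of the three assertions as a quantitative consequence of properties \ref{list:cubesinballs}--\ref{list:comparableradius}.

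For part (2), I would take $y \in B(x_{n,j}, r_{n,j}/16)$ and let $Q_{n+1,k}$ be the cube of level $n+1$ containing $y$. Write $Q_{n+1,\ell}$ for the center-child of $Q_{n,j}$, i.e.\ the one with $x_{n,j}\in Q_{n+1,\ell}$; property \ref{list:centercubealpha} gives $r_{n+1,\ell}\le C_2 \alpha_n^d r_{n,j}$, which is negligible for large $n$. If $r_{n+1,k}$ were at least $r_{n,j}/(16 C_1)$, then the estimate $d(x_{n,j},x_{n+1,k})\le r_{n,j}/16 + C_1 r_{n+1,k}$ would place $x_{n,j}\in B(x_{n+1,k}, 2C_1 r_{n+1,k})$, and \ref{list:comparableradius} with $T=2C_1$ would force $r_{n+1,\ell}$ and $r_{n+1,k}$ comparable with constant $C_3(2C_1)$, contradicting the smallness of $r_{n+1,\ell}$ for $n_0$ large. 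Hence $r_{n+1,k}<r_{n,j}/(16C_1)$ and in particular the diameter bound $2C_1 r_{n+1,k}<r_{n,j}/8$ yields $Q_{n+1,k}\subset B(x_{n,j}, r_{n,j}/16 + r_{n,j}/8)\subset B(x_{n,j}, r_{n,j}/2)$, so $Q_{n+1,k}\subset \I_{n,j}$.

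For part (3), I would argue by contradiction: pick $y\in \I_{n,i}\cap B(x,2r)$ for some $i\ne j$. Since $\I_{n,i}\subset Q_{n,i}\subset B(x_{n,i}, C_1 r_{n,i})$ and $x\in Q_{n,j}\subset B(x_{n,j}, C_1 r_{n,j})$, the triangle inequality gives $d(y,x_{n,j})\le 2r + C_1 r_{n,j}$, and the hypothesis $r\le r_{n,j}/(32 C_3(2C_1))$ implies $d(y,x_{n,j}) \le 2C_1 r_{n,j}$. Thus $Q_{n,i}\cap B(x_{n,j}, 2C_1 r_{n,j})\ne \emptyset$, and \ref{list:comparableradius} yields $r_{n,i}\ge r_{n,j}/C_3(2C_1)\ge 32 r$. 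Combining this with $y\in \I_{n,i}\subset B(x_{n,i}, r_{n,i}/2)$ gives $d(x,x_{n,i})\le 2r + r_{n,i}/2 \le r_{n,i}/16 + r_{n,i}/2 < r_{n,i}$, so $x\in B(x_{n,i}, r_{n,i})\subset Q_{n,i}$, contradicting $x\in Q_{n,j}$ with $i\ne j$.

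For part (1), I would first use that $Q_{n,i_0}\subset B(x,r)$ and the uniform perfectness/ball--cube relations to extract $r_{n,i_0}\le 2Dr$. The parent $Q_{n-1,i_1}$ does \emph{not} fit inside $B(x,r)$, and a similar diameter argument (together with \ref{list:cubesinballs}) yields a lower bound $r_{n-1,i_1} \ge r/(2 C_1)$. For a cube $Q_{m,i}$ with $m\le n-2$ meeting $B(x,2r)$, I would take a level-$(n-1)$ subcube $Q_{n-1,k}\subset Q_{m,i}$ meeting $B(x,2r)$ and locate it with respect to $Q_{n-1,i_1}$: using \ref{list:comparableradius} with a suitable $T$, $r_{n-1,k}$ is comparable to $r_{n-1,i_1}$, hence bounded below by a fixed multiple of $r$. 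Then property \ref{list:centercubealpha} is applied along the chain $Q_{n-1,k}\subset Q_{n-2,\cdot}\subset\cdots\subset Q_{m,i}$: at each level where one passes to the center-child's parent the radius grows by the factor $C_2^{-1}\alpha_k^{-1/d}$, while non-center steps only improve the bound via Ahlfors-regularity comparability. Choosing $n_0$ large enough that $C_2^{-1}\alpha_k^{-1/d}$ dominates any fixed constant for $k\ge n_0$ gives $r_{m,i}\ge 8r$ for all $m\le n-2$.

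The main obstacle lies in part (1): unlike (2) and (3), which are local and are driven by the center-child estimate in \ref{list:centercubealpha}, part (1) requires controlling arbitrary cubes $Q_{m,i}$ meeting $B(x,2r)$ that need not lie on the ancestor chain of $Q_{n,i_0}$. The trick is to transfer control from level $n-1$ via \ref{list:comparableradius} and then lift it inductively through \ref{list:centercubealpha}; combining these two into a uniform bound across several scales, and ensuring that non-center children do not spoil the estimate, is the delicate step and is precisely why $n_0$ must be chosen large.
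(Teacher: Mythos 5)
Your arguments for parts (2) and (3) are correct and essentially the ones the paper uses: in (2) you run the same contradiction through property \ref{list:comparableradius} against the center child's bound from \ref{list:centercubealpha}, and in (3) you combine \ref{list:comparableradius} with the fact that $\I_{n,i}\subset B(x_{n,i},\tfrac12 r_{n,i})$ while $x\notin B(x_{n,i},r_{n,i})$ (a step the paper leaves implicit). The problem is part (1), where your proposed mechanism does not close the argument. After you have established that every level-$(n-1)$ cube meeting $B(x,2r)$ has radius $\gtrsim r$, the whole content of (1) is the single step from level $n-1$ to level $n-2$: you must show that a level-$(n-2)$ cube is \emph{much larger} than its level-$(n-1)$ children, so that $r_{n-2,i}\ge 8 D C_1 r$ (after which all higher ancestors follow from the crude containment bound $2C_1 r_{m,j}\ge \diam Q_{m,j}\ge \diam Q_{n-2,i}\ge r_{n-2,i}/D$, with no further induction needed). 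But the child $Q_{n-1,k}\subset Q_{n-2,i}$ that meets $B(x,2r)$ need not be the center child, and for a non-center child property \ref{list:centercubealpha} says nothing. Your phrase ``non-center steps only improve the bound via Ahlfors-regularity comparability'' is where the gap sits: Ahlfors regularity controls measures of balls, not the ratio of a sibling's radius to the parent's, and containment alone only gives the \emph{lossy} bound $r_{n-2,i}\ge \frac{1}{2DC_1}r_{n-1,k}$, which is weaker than what you need, not stronger.

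The missing idea (which is how the paper closes this) is another contradiction argument through \ref{list:comparableradius} with a \emph{fixed} $T$: suppose some child $Q_{n-1,k}\subset Q_{n-2,i}$ had $r_{n-1,k}\ge c\, r_{n-2,i}$ for a fixed $c>0$. Then $Q_{n-2,i}\subset B(x_{n-1,k},P C_1 r_{n-1,k})$ for $P$ depending only on $c$, $C_1$, $D$, so the center child $Q_{n-1,k_i}\ni x_{n-2,i}$ meets this ball and \ref{list:comparableradius} forces $r_{n-1,k_i}\ge \frac{1}{C_3(P)}r_{n-1,k}\ge \frac{c}{C_3(P)}r_{n-2,i}$, contradicting $r_{n-1,k_i}\le C_2\alpha_{n-2}^{d}r_{n-2,i}$ once $n_0$ is large. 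This yields that \emph{every} child of $Q_{n-2,i}$ has radius at most a prescribed small fraction of $r_{n-2,i}$, which combined with your lower bound $r_{n-1,k}\gtrsim r$ gives $r_{n-2,i}\ge 8DC_1 r$ in one step. Note also that you cannot iterate \ref{list:centercubealpha} ``along the chain'' as you propose even for center children: the growth factor from child to parent is $C_2^{-1}\alpha_k^{-d}$ (coming from the upper bound $r_{n+1,j}\le C_2\alpha_n^{d}r_{n,i}$), not $C_2^{-1}\alpha_k^{-1/d}$, and in any case the induction is unnecessary once the single step $n-1\to n-2$ is done correctly.
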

  \begin{proof}
    Let $x \in Q_{n-1,j}$, and since $Q_{n-1,j} \not\subset B(x,r)$, we have $r \leq 2C_{1} r_{n-1,j}$. 
    This implies $B(x,2 r) \subset B(x_{n-1,j}, 8 C_1 r_{n-1,j})$, which again implies that 
    $\frac{1}{C_3(8 C_1)} r_{n-1,k} \leq r_{n-1,j} \leq C_3(8 C_1) r_{n-1,k}$
    for all $Q_{n-1,k} \cap B(x,2 r) \neq \emptyset$. Thus 
    \begin{equation}
  r \leq 2C_{1} r_{n-1,j} \leq 2C_{1} C_3(8 C_1) r_{n-1,k}, \text{ for all } Q_{n-1,k} \cap B(x,2 r) \neq \emptyset.
  \label{eqn:RsmallerthanLastLEmma}
    \end{equation}
    Suppose $Q_{n-2,i} \cap B(x,2 r) \neq \emptyset$. Since for all 
    $Q_{n-2,i} \subset Q_{m,j}$ we have $\frac{1}{D C_1}r_{n-2,i} \leq r_{m,j}$, it is enough to show that
    $4 r \leq \frac{1}{2} \frac{1}{D C_1} r_{n-2,i}$ for any 
    $Q_{n-2,i} \cap B(x, 2 r) \neq \emptyset$.
    We now claim that $8 C_1 C_3(8 C_1) r_{n-1,k} \leq \frac{1}{2} \frac{1}{D C_1} r_{n-2,i}$ for all
    $Q_{n-1,k} \subset Q_{n-2,i}$, which together with 
    \eqref{eqn:RsmallerthanLastLEmma} would give the claim. Assume on the contrary,
    that there exists $Q_{n-1,k} \subset Q_{n-2,j}$ such that 
    $8 C_1 C_3(8 C_1) r_{n-1,k} > \frac{1}{2}\frac{1}{D C_1} r_{n-2,j}$. 
    This implies for $P:=32 D C_1^3 C_3(8 C_1)$ that $P r_{n-1,k} > 2 C_1 r_{n-2,j}$,
    and thus $Q_{n-2,j} \subset B(x_{n-1,k},P C_1 r_{n-1,k})$. In particular, \ref{list:comparableradius} 
    then implies for the center cube $Q_{n-1,k_j} \ni x_{n-2,j}$ that 
    $r_{n-1,k_j} \geq \frac{1}{C_3(P)} r_{n-1,k} > \frac{1}{16 C_1^2 C_3(P) C_3(8 C_1)}r_{n-2,j}$, 
    but \ref{list:centercubealpha} on the other hand states that $ r_{n-1,k_j}  \leq C_2 \alpha_n^d r_{n-2,j} $.
    This would be a contradiction, when $C_2 \alpha_n^d < \frac{1}{16 C_1^2 C_3(P) C_3(8 C_1)}$.
    The first condition is thus guaranteed.

     When $Q_{n+1,i} \cap X \setminus B(x_{n,j},\frac{1}{2} r_{n,j}) \neq \emptyset$ and 
     $Q_{n+1,i} \cap B(x_{n,j},\frac{1}{16} r_{n,j}) \neq \emptyset$,
     it follows that $2 C_1 r_{n+1,i} \geq \frac{7}{16} r_{n,j}.$ On the other hand, for 
     $Q_{n+1,k} \ni x_{n,j}$ we have $r_{n+1,k} \leq C_2 \alpha_n^{d} r_{n,j}$ and
     $r_{n+1,i} \leq C_3(3 C_1) r_{n+1,k}$ by \ref{list:centercubealpha} and \ref{list:comparableradius}.
     Together these imply that $\frac{7}{16} \leq C_3(3 C_1) C_2 \alpha_n^d$. 
     Recall that $\alpha_n \to 0$ as $n \to \infty$; so this quarantees the second claim.

     The third claim is true without any bound on the indices $n$. With assumptions $x \in Q_{n,j}$ and 
     $r \leq \frac{1}{32 C_3(2 C_1)}r_{n,j}$  we have that $B(x, 2 r) \subset B(x_{n,j}, 2 C_1 r_{n,j})$,
     and thus by \ref{list:comparableradius}, $r_{n,i} \geq \frac{1}{C_3(2 C_1)} r_{n,j} \geq 32 r$ for any 
     $Q_{n,i} \cap B(x,2 r) \neq \emptyset$.
     This proves the third claim.
   \end{proof}
  
  Since we do mass distribution within the cubes, the measure of a fixed cube
  does not change after finitely many steps. To prove that the resulting measures are doubling, 
  we need to be able to approximate the measures of balls 
  with the measures of cubes. For this purpose we have the next lemma. 
  \begin{lemma}
    There exist $c_9>0$ and $C_9<\infty$ such that if $IN(B(x,r),n) \neq \emptyset$, then
    $COV(B(x,2 r),n) \subset B(x,C_9 r)$ and 
    $B(x_{n,j}, c_9 r) \subset IN(B(x,r),n)$ for some $x_{n,j}$.
    \label{le:measofballbycubes} 
  \end{lemma}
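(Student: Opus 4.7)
The plan is to fix any $Q_{n,j_0} \in IN(B(x,r),n)$, split into two cases based on whether the level-$n$ cube $Q_{n,j_x}$ containing $x$ lies in $IN$, and systematically apply property \ref{list:comparableradius} at a \emph{universal} scale $T$ (i.e.\ one that does not depend on $n$, $r$, or the particular cube). The key structural observation is that when $Q_{n,j_x} \not\subset B(x,r)$ the diameter bound $\diam Q_{n,j_x} \geq r$ automatically yields the uniform lower bound $r_{n,j_x} \geq r/(2C_1)$, while the complementary case admits a direct local argument around $x_{n,k}$.

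For the outer inclusion $COV(B(x,2r),n) \subset B(x,C_9 r)$: when $Q_{n,j_x} \subset B(x,r)$ (Case (i)), for any $Q_{n,k} \in COV(B(x,2r),n)$ the point $x$ lies both in $Q_{n,j_x}$ and in $B(x_{n,k}, (C_1 + 2r/r_{n,k}) r_{n,k})$, so in the regime $r_{n,k} \geq r$ the scale $T = C_1 + 2$ is universal and \ref{list:comparableradius} forces $r_{n,j_x} \geq r_{n,k}/C_3(C_1+2)$, hence $r_{n,k} \leq C_3(C_1+2)\,r$; the regime $r_{n,k} < r$ is trivial. When $Q_{n,j_x} \not\subset B(x,r)$ (Case (ii)), first observe $B(x,r) \subset B(x_{n,j_x}, 3C_1 r_{n,j_x})$, so \ref{list:comparableradius} at the universal scale $T = 3C_1$ yields $r_{n,j_0} \geq r/(2C_1 C_3(3C_1))$ for every $Q_{n,j_0}\in IN$; a second application of \ref{list:comparableradius} at $x_{n,j_0}$ (using $Q_{n,k}\cap B(x_{n,j_0},3r) \neq \emptyset$ from the triangle inequality) then bounds $r_{n,k}$ by a universal multiple of $r$. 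In both cases the containment $Q_{n,k} \subset B(x_{n,k}, C_1 r_{n,k}) \subset B(x, 2r + 2C_1 r_{n,k})$ delivers $C_9$.

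For the inner inclusion, the choice of $j_*$ depends on the case. In Case (ii), and in the subcase of Case (i) with $r_{n,j_x} \geq c_1 r$ for a threshold $c_1$ to be fixed, taking $j_* = j_0 \in IN$ (respectively $j_* = j_x$) makes $B(x_{n,j_*}, c_9 r) \subset B(x_{n,j_*}, r_{n,j_*}) \subset Q_{n,j_*}$ a subset of a single element of $IN$, with $c_9$ dictated by the already-established lower bound on $r_{n,j_*}$. The delicate subcase is Case (i) with $r_{n,j_x} < c_1 r$: here I pass to the smallest level $m^* \leq n$ with $IN(B(x,r), m^*) \neq \emptyset$ and pick $Q_{m^*,k^*}\in IN(B(x,r), m^*)$. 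Its parent $Q_{m^*-1,\ell}$ is by minimality not in $IN$, yet contains $x$ (since $Q_{n,j_x} \subset Q_{m^*-1,\ell}$), so $\diam Q_{m^*-1,\ell} \geq r$ and hence $r_{m^*-1,\ell} \geq r/(2C_1)$. Using Lemma \ref{le:biggerweightsconstant} together with \ref{list:centercubealpha} and \ref{list:comparableradius} one transfers this coarse estimate to produce a level-$n$ cube $Q_{n,j_*}$ deep inside $Q_{m^*,k^*}$: since every level-$n$ descendant of $Q_{m^*,k^*}$ automatically belongs to $IN(B(x,r),n)$, the ball $B(x_{n,j_*}, c_9 r)$ is covered by these descendants for a universal $c_9$.

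The main obstacle is exactly this transfer step in the delicate subcase: \ref{list:centercubealpha} controls only the center subcube per level descent with $\alpha_n$-dependent bounds, and sibling comparability through \ref{list:comparableradius} likewise involves $\alpha_n$-dependent scales. The role of Lemma \ref{le:biggerweightsconstant} --- and of the careful choice of the starting level $n_0$ therein --- is precisely to guarantee that from $n_0$ onwards $\alpha_n$ is small enough for the relevant geometric scales to separate cleanly, so that descending from level $m^*$ down to level $n$ still leaves room for a ball of radius comparable to $r$ around a level-$n$ cube center sitting deep inside $Q_{m^*,k^*}$.
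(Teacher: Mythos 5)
Your outer inclusion is fine and runs on the same engine as the paper's: exhibit one cube of radius $\lesssim r$ (a cube contained in $B(x,r)$ has radius at most $2Dr$ by uniform perfectness, and a cube containing $x$ but not contained in $B(x,r)$ has $2C_1r_{n,j_x}\geq r$), then use \ref{list:comparableradius} at a universal scale to bound every covering cube's radius by a multiple of $r$. The same goes for your treatment of the inner inclusion in Case (ii) and in the subcase $r_{n,j_x}\geq c_1 r$. The problem is the ``delicate subcase,'' where you yourself flag the transfer step as the main obstacle: as written it is a genuine gap, and the multi-level detour you sketch (passing to the first level $m^*$ with $IN(B(x,r),m^*)\neq\emptyset$, then descending to level $n$ via \ref{list:centercubealpha} and Lemma \ref{le:biggerweightsconstant}) is not carried out and is also misdirected. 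It would make the constant depend on $n_0$ (the lemma is stated for all $n$ with no such restriction), and the lower bound $r_{m^*,k^*}\gtrsim r$ that the descent would need is exactly the kind of estimate you are trying to prove, so the argument threatens to be circular.

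The missing idea is a dichotomy \emph{at level $n$ itself}, which is how the paper disposes of this case in two lines. Either $B(x,\tfrac12 r)\subset IN(B(x,r),n)$, or some $Q_{n,j}$ meets both $B(x,\tfrac12 r)$ and $X\setminus B(x,r)$. In the second alternative $2C_1 r_{n,j}\geq\diam Q_{n,j}\geq\tfrac12 r$, hence $B(x,r)\subset B(x_{n,j},8C_1r_{n,j})$, and \ref{list:comparableradius} forces \emph{every} cube $Q_{n,i}\subset B(x,r)$ to satisfy $r_{n,i}\geq\frac{1}{4C_1C_3(8C_1)}r$ — so your delicate subcase ($Q_{n,j_x}\in IN$ with $r_{n,j_x}<c_1r$) simply cannot occur here once $c_1$ is chosen below that threshold, and $B(x_{n,i},r_{n,i})\subset Q_{n,i}$ finishes the claim. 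In the first alternative, smallness of $r_{n,j_x}$ is an asset rather than an obstacle: $d(x,x_{n,j_x})<C_1r_{n,j_x}<C_1c_1r\leq\tfrac14 r$, so $B(x_{n,j_x},\tfrac14 r)\subset B(x,\tfrac12 r)\subset IN(B(x,r),n)$. No parent cubes, no \ref{list:centercubealpha}, and no appeal to Lemma \ref{le:biggerweightsconstant} are needed.
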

  \begin{proof}
    Let $Q_{n,j} \cap B(x, 2 r) \neq \emptyset$ and $Q_{n,i} \subset B(x,r)$. Combining this with 
    \ref{list:comparableradius} and \ref{list:cubesinballs} yields $ \frac{1}{D} r_{n,j} \leq 2 C_3(4 C_1) r$.
    Thus, $COV(B(x,2 r),n) \subset B(x,(2 + 4 D C_3(4 C_1) ) r)$. This proves the first claim.

    For the other inclusion, the argument is the same as in the proof of the second claim of 
    Lemma \ref{le:biggerweightsconstant}: 
    Let $Q_{n,i} \subset IN(B(x,r),n)$. If $B(x,\frac{1}{2} r) \subset IN(B(x,r),n)$, 
    the claim is settled; so assume this does not hold. Thus, there exists a cube $Q_{n,j}$ such that
    $Q_{n,j} \cap B(x,\frac{1}{2} r) \neq \emptyset$ 
    and $Q_{n,j} \cap X\setminus B(x,r) \neq \emptyset$. 
    Now $2 C_1 r_{n,j} \geq \diam(Q_{n,j}) \geq \frac{1}{2} r$. This means that 
    $Q_{n,i} \subset B(x,r) \subset B(x_{n,j},8 C_1 r_{n,j})$, and thus by 
    \ref{list:comparableradius}: $r_{n,i} \geq \frac{1}{C_3(8 C_1)} r_{n,j} > \frac{1}{4 C_1 C_3(8 C_1)} r$. 
    This proves the second claim.
  \end{proof}

  In particular, the above lemma gives the following corollary.
  \begin{corollary}
    
  Given a doubling measure $\mu$, there exists a constant $C_{\mu} \geq 1$ 
    depending only on the doubling constant of the measure $\mu$ such that
    \begin{equation}
      \frac{1}{C_{\mu}} \mu( COV(B(x,2 r),n)) \leq \mu(B(x,r)) \leq C_{\mu} \mu(IN(B(x,r),n)),
      \label{eqn:measofballbycubes}
    \end{equation}
    whenever $IN(B(x,r),n) \neq \emptyset$.
    \label{cor:measofballbyINOUT}
  \end{corollary}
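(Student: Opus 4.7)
The plan is to deduce the corollary directly from Lemma \ref{le:measofballbycubes} combined with an iterated application of the doubling property of $\mu$. Both inequalities reduce to comparing measures of two balls whose radii differ only by a fixed multiplicative constant (independent of $x$, $r$, and $n$), and the standard fact that for a doubling measure with constant $C$ one has $\mu(B(y,\lambda r)) \leq C^{\lceil \log_2 \lambda \rceil} \mu(B(y,r))$ for every $\lambda \geq 1$ will then produce the desired constant $C_\mu$.

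For the upper bound $\mu(B(x,r)) \leq C_\mu \mu(IN(B(x,r),n))$, Lemma \ref{le:measofballbycubes} furnishes a center $x_{n,j}$ with
\[
B(x_{n,j}, c_9 r) \subset IN(B(x,r),n) \subset B(x,r).
\]
In particular $x_{n,j} \in B(x,r)$, so $B(x,r) \subset B(x_{n,j}, 2 r)$. Iterating the doubling property $\lceil \log_2(2/c_9) \rceil$ times gives $\mu(B(x_{n,j}, 2 r)) \leq C_\mu \mu(B(x_{n,j}, c_9 r))$, and the inequality follows.

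For the lower bound $\mu(COV(B(x,2 r),n)) \leq C_\mu \mu(B(x,r))$, Lemma \ref{le:measofballbycubes} asserts $COV(B(x,2 r),n) \subset B(x, C_9 r)$. Monotonicity of $\mu$ and iterated doubling (this time $\lceil \log_2 C_9 \rceil$ applications) yield $\mu(B(x, C_9 r)) \leq C_\mu \mu(B(x, r))$, completing the proof.

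There is no real obstacle here; the only thing to check is that the constants $c_9$ and $C_9$ from Lemma \ref{le:measofballbycubes} are universal (independent of $x$, $r$, $n$), which they are by that lemma's statement, so the resulting constant $C_\mu$ depends only on $c_9$, $C_9$, and the doubling constant of $\mu$. Taking the maximum of the two constants produced in the two halves gives a single $C_\mu$ that works for both inequalities simultaneously.
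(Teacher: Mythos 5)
Your proof is correct and follows exactly the route the paper intends: the paper states Corollary \ref{cor:measofballbyINOUT} as an immediate consequence of Lemma \ref{le:measofballbycubes}, and your write-up supplies precisely the standard ball-comparison plus iterated-doubling argument that is left implicit. Your remark that $C_\mu$ in fact also depends on $c_9$ and $C_9$ (hence on the structural constants of the $(\alpha_n)$-regular set, not solely on the doubling constant of $\mu$) is a fair and slightly more careful reading than the corollary's own phrasing.
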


  The next lemma is the key ingredient in the proof. It essentially says that in an Ahlfors q-regular space $(X,d,\HH)$,
  a measure given by $d(x,x_0)^{\rho} d\HH(x)$ is doubling when $-q<\rho<\infty$.
  Recall that in Euclidean spaces this is well known since the radial stretch function is quasisymmetric.
  \begin{lemma}
    Let $\rho>-q$ and $n \geq n_0$. Then the measures $\theta_n$ 
    are doubling with 
    constant $C_5$, which is independent of $n$.
    \label{le:weightdoubling}
  \end{lemma}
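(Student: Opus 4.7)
The plan is to fix a ball $B(x,r)$ with $n\geq n_0$ and $x\in Q_{n,j_0}$, and analyze two regimes depending on the ratio $r/r_{n,j_0}$; the two regimes require genuinely different arguments.

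In the \emph{large-scale regime}, when $r$ is not much smaller than $r_{n,j_0}$ (so that $B(x,r)$ contains at least one level-$n$ cube, which happens for instance when $r\geq 2C_1 r_{n,j_0}$), I would reduce doubling of $\theta_n$ to doubling of $\HH$ via the identity $\theta_n(Q_{n,i})=\HH(Q_{n,i})$ from \eqref{eqn:thetaEqualsHausd}. By Lemma~\ref{le:measofballbycubes}, the cubes of level $n$ meeting $B(x,2r)$ all lie inside $B(x,C_9 r)$, while $B(x,r)$ contains some cube $Q_{n,k}$ of radius comparable to $r$. Bounding $\theta_n(B(x,2r))$ above by the sum of $\theta_n=\HH$ over the covering cubes and $\theta_n(B(x,r))$ below by $\HH(Q_{n,k})$ then yields the doubling estimate via Ahlfors regularity. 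The narrow transitional range $r\asymp r_{n,j_0}$ can be handled by direct comparison to $\theta_n(Q_{n,j_0})=\HH(Q_{n,j_0})\asymp r_{n,j_0}^q$.

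In the \emph{small-scale regime}, when $r\leq \frac{1}{32 C_3(2C_1)} r_{n,j_0}$, Lemma~\ref{le:biggerweightsconstant}(3) localizes us: on $B(x,2r)$ the weight $y_n$ equals $1$ outside $\I_{n,j_0}$ and $A_{n,j_0}\,d(x_{n,j_0},\cdot)^{\rho}$ inside $\I_{n,j_0}$. Here I would invoke (proving in a brief aside) the fact alluded to just before the lemma, that in any Ahlfors $q$-regular space the measure $d(x_0,y)^{\rho}\,d\HH(y)$ is doubling for every $\rho>-q$, by splitting on whether $d(x,x_0)\leq 4r$ or not and using $\int_{B(x_0,R)} d(x_0,y)^{\rho}\,d\HH \asymp R^{q+\rho}$ from Ahlfors regularity. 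The key uniformity-in-$n$ ingredient is the compatibility $A_{n,j_0}\, d(x_{n,j_0},y)^{\rho}\asymp 1$ on the boundary annulus where $d(x_{n,j_0},y)\asymp r_{n,j_0}$; this follows from \eqref{eqn:vakioidenAkertaluokka} and ensures that the piecewise weight has no ``cliff'' across $\partial\I_{n,j_0}$, so the radial-stretch doubling and the $\HH$-doubling can be glued together with uniform constants.

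The \emph{main obstacle} will be the small-scale sub-case in which $B(x,r)$ straddles $\partial\I_{n,j_0}$, forcing one to control simultaneously a piece weighted by the radial stretch and a piece weighted by $1$. One must show that the upper bound on $B(x,2r)$ is dominated by the lower bound on $B(x,r)$ uniformly in $r$ and $n$, and the sharp two-sided estimate $A_{n,j_0}\asymp r_{n,j_0}^{-\rho}$ from \eqref{eqn:vakioidenAkertaluokka} is exactly what makes the gluing work. Once this is done, the uniformity of $C_5$ in $n$ is automatic, because every constant appearing in the argument depends only on $\rho$, $q$, the Ahlfors constant $C$, the uniform perfectness constant $D$, and the fixed structural constants $C_1,C_2,C_3(\cdot)$ of the $(\alpha_n)$-regularity.
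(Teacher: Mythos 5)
Your proposal is correct and follows essentially the same route as the paper: the same decomposition by scale relative to $r_{n,j}$, the same use of Lemma~\ref{le:biggerweightsconstant}(3) to localize to a single $\I_{n,j}$, the same two-sided bound \eqref{eqn:vakioidenAkertaluokka} on $A_{n,j}$ to glue the radial weight to the constant weight across $\partial\I_{n,j}$ (the paper notes the straddling ball sits in the annulus $B(x_{n,j},r_{n,j})\setminus B(x_{n,j},\tfrac18 r_{n,j})$ where the weight is pinched between constants), and the same reduction to $\HH$-doubling via $IN$/$COV$ at large scales. The paper merely makes your ``split on $d(x,x_{n,j})$ versus $r$'' explicit as three cases, handling the near-centered case through the auxiliary Lemma~\ref{le:centeredballsdoubling}.
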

  To start the proof, we check the doubling condition for the measure $\theta_n$ and balls centered at $x = x_{n,j}$,
  $j\in N_n$.

   \begin{lemma}
     There exists a constant $C_6 \geq 1$, such that 
     $\theta_n(B(x_{n,j},2 r)) \leq C_6 \theta_n(B(x_{n,j},r))$ for all $n\geq n_0, \; j\in N_n$ and $r>0$.
     \label{le:centeredballsdoubling}
   \end{lemma}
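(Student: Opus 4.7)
My plan is to estimate $\theta_n(B(x_{n,j}, s))$ for $s \in \{r, 2r\}$ separately, depending on the size of $r$ relative to $R := r_{n,j}$. The basic tool is a standard radial integral estimate that in any Ahlfors $q$-regular space, for $\rho > -q$, gives
\[
\int_{B(x_0, s)} d(x_0, x)^\rho \, d\HH(x) \approx s^{\rho + q},
\]
with constants depending only on $\rho$, $q$, and the Ahlfors constant. This follows from the layer-cake formula together with the easy fact that, for some $\alpha \in (0,1)$ depending only on the Ahlfors constant, the annulus $B(x_0, s) \setminus B(x_0, \alpha s)$ has $\HH$-measure comparable to $s^q$. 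Combined with $A_{n,j} \approx R^{-\rho}$ from \eqref{eqn:vakioidenAkertaluokka}, this yields
\[
\theta_n(B(x_{n,j}, s)) \approx R^{-\rho} s^{\rho + q}
\]
whenever $B(x_{n,j}, s) \subset \I_{n,j}$; by the second conclusion of Lemma \ref{le:biggerweightsconstant}, this covers the range $s \leq R/16$.

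The analysis then splits into three cases. When $2r \leq R/16$, both balls lie in $\I_{n,j}$ and the previous display applied with $s = r$ and $s = 2r$ gives a doubling ratio of order $2^{\rho + q}$. When $R/16 < 2r \leq 2 C_1 R$, I take the lower bound from $\theta_n(B(x_{n,j}, r)) \geq \theta_n(B(x_{n,j}, R/32)) \geq c R^q$. For the matching upper bound, property \ref{list:comparableradius} forces every level-$n$ cube meeting $B(x_{n,j}, 2r) \subset B(x_{n,j}, 2 C_1 R)$ to have radius comparable to $R$ and to sit inside a fixed enlargement $B(x_{n,j}, C' R)$; summing over this finite collection via \eqref{eqn:thetaEqualsHausd} and invoking Ahlfors regularity gives $\theta_n(B(x_{n,j}, 2r)) \leq C R^q$, so the ratio is bounded.

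In the third regime $2r > 2 C_1 R$ we have $r > C_1 R$, so $Q_{n,j} \subset B(x_{n,j}, C_1 R) \subset B(x_{n,j}, r)$ and $IN(B(x_{n,j}, r), n)$ is nonempty. Lemma \ref{le:measofballbycubes} then supplies a point $x_{n,l}$ with $B(x_{n,l}, c_9 r) \subset IN(B(x_{n,j}, r), n)$ and guarantees $COV(B(x_{n,j}, 2r), n) \subset B(x_{n,j}, C_9 r)$. Since $\theta_n$ agrees with $\HH$ on each level-$n$ cube by \eqref{eqn:thetaEqualsHausd} and cubes at the same level are pairwise disjoint, summing yields
\[
\theta_n(B(x_{n,j}, r)) \geq \HH(IN(B(x_{n,j}, r), n)) \geq \HH(B(x_{n,l}, c_9 r)) \geq c r^q
\]
and, symmetrically, $\theta_n(B(x_{n,j}, 2r)) \leq \HH(B(x_{n,j}, C_9 r)) \leq C r^q$. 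Taking the maximum over the three ratios produces a uniform $C_6$ depending only on $\rho$, $q$, the Ahlfors constant of $X$, and the constants coming from the $(\alpha_n)$-regular structure.

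The main obstacle is the intermediate case, where the ball straddles the transition between the radially weighted region $\I_{n,j}$ and the unit-weighted region $\I_{n,j}^c$, or just begins to leak out of $Q_{n,j}$ into neighbouring level-$n$ cubes. Neither the pure radial integral nor the pure cube-covering bound applies by itself; the fix is to pair a radial lower bound evaluated at the fixed deep-interior scale $R/32$ with an upper bound obtained by summing $\HH$-measure over the boundedly many nearby level-$n$ cubes of radius comparable to $R$ guaranteed by property \ref{list:comparableradius}.
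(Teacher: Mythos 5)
Your proposal is correct and follows essentially the same route as the paper: the same three-way split of radii at $r\approx r_{n,j}/32$ and $r\approx C_1 r_{n,j}$, the same annular/layer-cake estimate giving $\theta_n(B(x_{n,j},s))\approx A_{n,j}s^{\rho+q}\approx r_{n,j}^{-\rho}s^{\rho+q}$ deep inside $\I_{n,j}$, and the same reduction to $\HH$ via \eqref{eqn:thetaEqualsHausd} together with the $IN$/$COV$ cube approximations in the intermediate and large regimes. The only cosmetic difference is that you treat both signs of $\rho$ at once through the two-sided comparability, where the paper writes out $-q<\rho\leq 0$ and leaves $\rho>0$ to the reader.
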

     We present the computations only in the case $-q < \rho \leq 0$. 
     The case $0 \leq \rho$ is left to the reader and can be proved with the same arguments,
     only by changing the estimate for distance within each annulus from inner radius to outer radius and vice versa.

   \begin{proof}
       Recall that the constant $C$ denotes the constant in the definition of the Ahlfors regular measure.
     Let $n \geq n_0 ,j\in N_n$  and $r>0$ be given. Let us first assume that $r\leq \frac{1}{32} r_{n,j}$ and 
     note that since $n \geq n_0$ in this case, $B(x_{n,j},2 r) \subset \I_{n,j}$ by Lemma \ref{le:biggerweightsconstant},
     and thus
     \begin{equation}
       \label{eqn:waytocomputemeasureofcenter1}
       \begin{array}{l}
       \theta_n(B(x_{n,j},2 r)) = A_{n,j} \int_{B(x_{n,j},2 r)} d(x_{n,j},x)^{\rho} d\HH \\
       =  A_{n,j} \sum_{i=0}^{\infty} \int_{B(x_{n,j},\frac{2 r}{p^i}) \setminus B(x_{n,j},\frac{2 r}{p^{i+1}})} d(x_{n,j},x)^{\rho} d\HH  \\
       \leq A_{n,j} \sum_{i=0}^{\infty} \int_{B(x_{n,j},\frac{2 r}{p^i}) \setminus B(x_{n,j},\frac{2 r}{p^{i+1}})} \left( \frac{2 r}{p^{i+1}} \right)^{\rho} d\HH \\
       \leq A_{n,j} \sum_{i=0}^{\infty} \left( \frac{2 r}{p^{i+1}} \right)^{\rho} \left( C\left( \frac{2 r}{p^i} \right)^q - \frac{1}{C} \left( \frac{2 r}{p^{i+1}} \right)^q \right) \\
       =  A_{n,j} p^{-\rho} (2 r)^{\rho+ q} \left( C - \frac{1}{C p^q} \right) \sum_{i=0}^{\infty} p^{-i (\rho + q)},
     \end{array}
     \end{equation}
     and on the other hand, 
     \begin{equation}
       \label{eqn:waytocomputemeasureofcenter}
       \begin{array}{l}
       \theta_n(B(x_{n,j}, r)) = A_{n,j} \int_{B(x_{n,j}, r)} d(x_{n,j},x)^{\rho} d\HH \\ 
       = A_{n,j} \sum_{i=0}^{\infty} \int_{B(x_{n,j},\frac{r}{p^i}) \setminus B(x_{n,j},\frac{ r}{p^{i+1}})} d(x_{n,j},x)^{\rho} d\HH  \\
       \geq A_{n,j} \sum_{i=0}^{\infty} \int_{B(x_{n,j},\frac{r}{p^i}) \setminus B(x_{n,j},\frac{ r}{p^{i+1}})} \left( \frac{r}{p^i} \right)^{\rho} d\HH  \\
       \geq A_{n,j} \sum_{i=0}^{\infty} \left( \frac{r}{p^{i}} \right)^{\rho} \left( \frac{1}{C}\left( \frac{r}{p^i} \right)^q - C \left( \frac{r}{p^{i+1}} \right)^q \right) \\
       =  A_{n,j} r^{\rho+ q} \left( \frac{1}{C} - \frac{C}{p^q} \right) \sum_{i=0}^{\infty} p^{-i (\rho + q)},
     \end{array}
     \end{equation}
     where $p\in\N$ is chosen to be such that $\frac{1}{C} - \frac{C}{p^q}>0$. 

     If $r\geq C_1 r_{n,i}$, it follows that we are really dealing with the original Ahlfors regular measure $\HH$,
     and the result follows
     from \eqref{eqn:measofballbycubes} and \eqref{eqn:thetaEqualsHausd}.

     Now we are left with the case $ \frac{1}{32}  r_{n,j} < r \leq C_1 r_{n,j}  $.
     In this case, we simply compare the measure of a ball with the measure of whole $Q_{n,j}$.
     Indeed, we compute as in \eqref{eqn:waytocomputemeasureofcenter}:
     \begin{eqnarray*}
       \theta_n(B(x_{n,j}, r)) \geq \theta_n(B(x_{n,j}, \frac{1}{32} r_{n,j})) 
       \geq A_{n,j}C_p \left(r_{n,j} \right)^{\rho + q}
       \geq C_p  \frac{1}{C_4} r_{n,j}^q, 
     \end{eqnarray*}
     where $C_p =  
     \left( \frac{1}{32} \right)^{\rho + q}\left( \frac{1}{C} - \frac{C}{p^q} \right) \sum_{i=0}^{\infty} p^{-i (\rho + q)}$, while 
     \begin{eqnarray*}
       \theta_n(B(x_{n,j},2 r)) \leq \theta_n(COV(B(x_{n,j},2 r),n)) = \HH(COV(B(x_{n,j},2 r),n)) \\
       \leq \HH(COV(B(x_{n,j},2 C_1 r_{n,j}),n)) \leq C_{\HH} C (2 C_1 r_{n,j})^q,
     \end{eqnarray*}
     by  \eqref{eqn:thetaEqualsHausd} and \eqref{eqn:measofballbycubes} and \eqref{eqn:ahlforsdefinition}.
   \end{proof}

   Now we proceed with the proof of Lemma \ref{le:weightdoubling}.

   \begin{proof}[Proof of Lemma \ref{le:weightdoubling}]
   Let $x \in Q_{n,j}$ and first assume 
   $r \leq \frac{1}{32 C_3(2 C_1)}  r_{n,j}$. By Lemma \ref{le:biggerweightsconstant} 
   this implies that $B(x,2 r) \cap \I_{n,i} = \emptyset$ for
   all $i \neq j$. If also $B(x, 2r) \cap \I_{n,j} = \emptyset$, the weight $y_n = 1$, and thus $d\theta_n=d\HH$ 
   in the whole $B(x, 2 r)$. 
   Let us now notice that 
   if $B(x,2 r) \cap \I_{n,j} \neq \emptyset \neq B(x,2 r) \cap \I_{n,j}^{c}$, it follows that
   \begin{eqnarray*}
 B(x,2 r) \subset B(x_{n,j},\frac{1}{2} r_{n,j} + 2 r) \setminus B(x_{n,j}, \frac{1}{4} r_{n,j} - 2 r)\\
  \subset B(x_{n,j}, r_{n,j}) \setminus B(x_{n,j},\frac{1}{8} r_{n,j}). 
   \end{eqnarray*}
   This again implies that
   \begin{equation*}
 \frac{1}{8} r_{n,j} \leq d(y,x_{n,j}) \leq r_{n,j}, \; \forall y \in B(x,2 r),
   \end{equation*}
   and thus by \eqref{eqn:vakioidenAkertaluokka},
   \begin{equation}
 \label{eqn:estimatesforweight1}
 \frac{1}{C_4} \leq A_{n,j} d(y,x_{n,j})^{\rho} \leq C_4 \left(\frac{1}{8} \right)^{\rho}, \;
 \forall y \in B(x, 2 r),
   \end{equation} when $-q < \rho \leq 0$ and 
   \begin{equation}
 \label{eqn:estimatesforweight2}
 \left(\frac{1}{8} \right)^{\rho}\frac{1}{C_4} \leq A_{n,j} d(y,x_{n,j})^{\rho} \leq C_4 ,\;
 \forall y \in B(x, 2 r),
   \end{equation}
   when $0\leq \rho $.
   Thus, the weight $y_{n}$ is bounded from below and from above in the whole $B(x,2r)$ 
   with constants that are 
   independent of $n$, and this  
   ensures the measure $\theta_n$ to be doubling in this case, with a constant independent of $n$.
   
  So, to complete the proof assuming $x \in Q_{n,j}$ and $r \leq \frac{1}{32 C_3(2 C_1 )} r_{n,j}$,
  we may now assume that $B(x,2 r) \subset \I_{n,j}$. We split the proof into three subcases:

	\begin{enumerate}
        \renewcommand{\theenumi}{Case \arabic{enumi}}
        \renewcommand{\labelenumi}{\theenumi}
     \item 
       $d(x_{n,j},x) < \frac{1}{2} r$
       \label{list:Case1}
     \item
       $\frac{1}{2} r \leq d(x_{n,j},x) \leq 4 r$
       \label{list:Case2}
     \item
       $d(x_{n,j},x) > 4 r$
       \label{list:Case3}
	\end{enumerate}
	Let us first go through \ref{list:Case1}-\ref{list:Case3} when $-q < \rho \leq 0$. 
	The obvious changes when $0<\rho$ are left to 
the reader.
For \ref{list:Case1}
we note that 
\begin{eqnarray*}
	 B(x_{n,j}, \frac{1}{4} r) \subset B(x,r), \\
	 B(x, 2 r) \subset B(x_{n,j},4 r)
       \end{eqnarray*}
       and thus we get by Lemma \ref{le:centeredballsdoubling}
       \begin{equation*}
	 \theta_n(B(x, 2 r)) \leq \theta_n(B(x_{n,j},4 r)) 
	 \leq C_6^4 \theta_n(B(x_{n,j},\frac{1}{4} r)) \leq C^2 \theta_n(B(x,r)).
       \end{equation*}

       For \ref{list:Case2} we estimate
       $B(x, 2 r) \subset B(x_{n,j},6 r)$ and compute as in \eqref{eqn:waytocomputemeasureofcenter1}:
       \begin{equation}
	 \label{eqn:case2approxabove}
	 \theta_n(B(x, 2r)) \leq \theta_n(B(x_{n,j},6 r)) = A_{n,j} \int_{B(x_{n,j}, 6 r)} d(x_{n,j},x)^{\rho} d\HH
	 \leq C_p r^{q+\rho},
       \end{equation}
       where $C_p=A_{n,j} p^{-\rho} 6^{\rho+ q} \left( C - \frac{1}{C p^q} \right) \sum_{i=0}^{\infty} p^{-i (\rho + q)}.$ 
       On the other hand, we also have  $\frac{1}{4} r \leq d(y,x_{n,j}) \leq 5r$ for all 
       $y \in B(x,\frac{1}{4} r)$, which implies
       \begin{equation}
	 \label{eqn:case2approxbelov}
	 \theta_n(B(x, r)) \geq \theta_n(B(x,\frac{1}{4} r)) = A_{n,j} \int_{B(x, \frac{1}{4} r)} d(x_{n,j},x)^{\rho} d\HH\\
	 \geq  A_{n,j} \frac{1}{C} (\frac{1}{4} r)^q (5 r)^{\rho} ,
       \end{equation}
       which together give the result.

       \ref{list:Case3} is proved by noting that the weight is essentially a constant inside $B(x,2 r)$.
       For all $j \in N_n$,
       $\forall z \in B(x,2 r), \; \forall y \in B(x,2 r)$, we see that
       \begin{equation*}
	 \begin{array}{l}
	 d(x_{n,j},z) \leq 4 d(x_{n,j}, y), 
       \end{array}
       \end{equation*}
       and thus by the $q$ -regularity of $\HH$
       \begin{equation*}
	 \begin{array}{l}
	 \theta_n(B(x,2 r)) \leq \int_{B(x,2 r)} \sup_{z \in B(x, 2 r)} y_n(z) d\HH \\
	 \leq 4^{-\rho} \int_{B(x,2 r)}\inf_{z \in B(x,2 r)} y_n(z) d\HH 
	 \leq C^2 2^q 4^{-\rho} \theta_n(B(x,r)).
       \end{array}
       \end{equation*}
       
       Now we are left with the range of radii
       $\frac{1}{32 C_3(2 C_1)} r_{n,j} < r $. If $ 2 C_1 r_{n,j} < r$, then $IN(B(x,r),n)\neq \emptyset$,
        and  by \eqref{eqn:measofballbycubes}
       and \eqref{eqn:thetaEqualsHausd} we get
       \begin{equation*}
	 \begin{array}{l}
	 \theta_n(B(x,2 r)) \leq \theta_n(COV(B(x,2 r),n)) = \HH(COV(B(x, 2 r),n)) \\
	 \leq C_{\HH}^2 \HH(IN(B(x,r),n)) = C_{\HH}^2\theta_n(IN(B(x,r),n)) \leq C_{\HH}^2 \theta_n(B(x,r)).
       \end{array}
       \end{equation*}
       For $-q < \rho \leq 0$ and
       $\frac{1}{32 C_3(2 C_1)} r_{n,j} < r <2 C_{1} r_{n,j}$ we can estimate the $\theta_n$ measure of $B(x,2 r)$
       from above in the same way
       as before. More precisely, by \eqref{eqn:measofballbycubes} and \eqref{eqn:thetaEqualsHausd}, we have
       \begin{equation}
	 \label{eqn:2Bmeasurefromabovebycover}
	 \begin{array}{l}
	   \theta_n(B(x,2 r)) \leq \theta_n(B(x,4 C_1 r_{n,j})) \leq \theta_n(COV(B(x,4 C_1 r_{n,j})),n) \\
	   = \HH((COV(B(x,4 C_1 r_{n,j})),n)) \leq C_{\HH} C (2 C_1 r_{n,j})^{q} \leq  C_{\HH} C (64 C_1 C_3(2) r)^{q}. 
       \end{array}
       \end{equation}
       To get an estimate from below, we use the rough estimate 
       $d(x_{n,j},y)^{\rho} \geq \frac{1}{2} r_{n,j}^{\rho}$ for every $y \in \I_{n,j}$ and by 
       \eqref{eqn:ahlforsdefinition} and \eqref{eqn:vakioidenAkertaluokka}: 
       \begin{equation*}
         \theta_n(B(x,r)) \geq \max\left\{A_{n,j} \left( \frac{1}{2}  r_{n,j} \right)^{\rho}, 1 \right\} \HH(B(x,r)) 
         \geq \max\left\{ \left(\frac{1}{2}  \right)^{\rho} \frac{1}{C_4},1 \right\} \frac{1}{C} r^{q} .
       \end{equation*}
       This completes the case $-q<\rho\leq0$.

       Let us finally check the case $\rho>0$ and
       $\frac{1}{32 C_3(2 C_1)} r_{n,j} < r <2 C_{1} r_{n,j}$. 
       As in \eqref{eqn:2Bmeasurefromabovebycover}, we get 
       $\theta_n(B(x,2 r)) \leq  C_{\HH} C (2 C_1 r_{n,j})^{q}$.
       From below, we estimate the measure $\theta_n(B(x,r))$ by $\theta_n(B(x,\frac{1}{32 C_3(2 C_1)} r_{n,j}))$
       and consider the same three cases as above. Like before, we can all the time assume that 
       $B(x,\frac{1}{32 C_3(2 C_1)} r_{n,j}) \subset \I_{n,j}$, otherwise the
       estimate \eqref{eqn:estimatesforweight2} yields 
       the result.

       For \ref{list:Case1} we approximate by a smaller $x_{n,j}$ centered ball and 
       compute as in \eqref{eqn:waytocomputemeasureofcenter}:
       \begin{equation}
	 \begin{array}{l}
	 \theta_n(B(x,\frac{1}{32 C_3(2 C_1)} r_{n,j})) \geq
	 \theta_n(B(x_{n,j},\frac{1}{4}\frac{1}{32 C_3(2 C_1)}  r_{n,j})) \\
	 \geq A_{n,j} C_p  \left(\frac{1}{4} \frac{1}{32 C_3(2 C_1)}  r_{n,j}\right)^{\rho + q} 
	 \geq C_p p^{-\rho} \frac{1}{C_4} \left( \frac{1}{4} \frac{1}{32 C_3(2 C_1)}  \right)^{\rho} r_{n,j}^{q},  
       \end{array}
       \end{equation}
       where $C_p=p^{-\rho} \left( \frac{1}{C} - \frac{C}{p^q} \right) \sum_{i=0}^{\infty} p^{-i (\rho + q)}$.

       For \ref{list:Case2} we compute as in \eqref{eqn:case2approxbelov}:  
       \begin{equation*}
	 \begin{array}{l}
	 \theta_n(B(x, \frac{1}{32 C_3(2 C_1)}  r_{n,j})) \\
	 \geq  A_{n,j} \frac{1}{C} (\frac{1}{4}  \frac{1}{32 C_3(2 C_1)}  r_{n,j})^q (\frac{1}{4} \frac{1}{32 C_3(2 C_1)}  r_{n,j})^{\rho} \\
	 \geq \frac{1}{C C_4} (\frac{1}{4}  \frac{1}{32 C_3(2 C_1)}  )^q (\frac{1}{4} \frac{1}{32 C_3(2 C_1)} )^{\rho}r_{n,j}^q.
       \end{array}
       \end{equation*}

       Finally for \ref{list:Case3},
       we note that $d(x_{n,j},x) > 4 r \geq \frac{1}{8}  r_{n,j}$, and thus $d(x_{n,j}, y) \geq \frac{1}{8}  r_{n,j} - \frac{1}{32 C_3(2 C_1)}  r_{n,j} \geq \frac{3}{32 C_3(2 C_1)}  r_{n,j}$, for all $y \in B(x,\frac{1}{32 C_3(2 C_1)}  r_{n,j})$. Now we can compute
       \begin{equation*}
	 \begin{array}{l}
	 \theta_n(B(x,\frac{1}{32 C_3(2 C_1)}  r_{n,j} )) \\
	 \geq A_{n,j} \int_{B(x,\frac{1}{32 C_3(2 C_1)}  r_{n,j})} d(x_{n,j},y)^{\rho} d\HH \\
	 \geq A_{n,j} \frac{1}{C} \left( \frac{1}{32 C_3(2 C_1)}  r_{n,j} \right)^q \left( \frac{3}{32 C_3(2 C_1)}  r_{n,j} \right)^{\rho} \\
	 \geq \frac{1}{C C_4} \left( \frac{1}{32 C_3(2 C_1)}  \right)^q \left( \frac{3}{32 C_3(2 C_1)} \right)^{\rho}r_{n,j}^{q}. 
       \end{array}
       \end{equation*}
       This completes the proof.
  \end{proof}

The next thing is almost like saying that the measure $t_n d\HH$ would also be doubling. 
We do not exactly prove this, but instead something a bit stronger, 
namely that the weight $t_n$ is comparable to a constant in balls small enough compared to $r_n$. 

  \begin{lemma}
    There exists a constant $C_7$ such that $\frac{1}{C_7} t_{n-1}(y) \leq t_{n-1}(x) \leq C_7 t_{n-1}(y)$
    for all $x\in Q_{n,i}$, $y \in Q_{n,j}$, whenever 
    $ B(x_{n,k},8C_1 r_{n,k}) \cap Q_{n,j} \neq \emptyset \neq B(x_{n,k},8C_1 r_{n,k}) \cap Q_{n,i}$
    for some $x_{n,k}$.
     
    \label{le:weightscomparable}
  \end{lemma}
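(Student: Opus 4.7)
The plan is to split into cases based on which of the sets $\I_{n-1,\ell}$ or $\I^c_{n-1,\ell}$ the cubes $Q_{n,i}$ and $Q_{n,j}$ belong to (for the relevant parent $Q_{n-1,\ell}$). First I would apply property \ref{list:comparableradius} at level $n$ with $T=8C_1$ to get $r_{n,i},r_{n,j}\approx r_{n,k}$, so that $Q_{n,i}\cup Q_{n,j}$ is contained in a ball of radius $O(r_{n,k})$ about $x_{n,k}$; in particular $d(x_{n,i},x_{n,j})\leq C r_{n,k}$ for a constant depending only on $C_1$ and $C_3(8C_1)$. Letting $Q_{n-1,\ell_i}\supset Q_{n,i}$, $Q_{n-1,\ell_j}\supset Q_{n,j}$, and $Q_{n-1,\ell_k}\supset Q_{n,k}$ be the level-$(n-1)$ parents, the ball about $x_{n-1,\ell_k}$ obtained by adding $8C_1 r_{n,k}$ still has radius $O(r_{n-1,\ell_k})$ (since $r_{n,k}\leq C_2\alpha_{n-1}^d r_{n-1,\ell_k}$ is much smaller than $r_{n-1,\ell_k}$ by \ref{list:centercubealpha}), so another application of \ref{list:comparableradius} at level $n-1$ gives $r_{n-1,\ell_i}\approx r_{n-1,\ell_j}\approx r_{n-1,\ell_k}$.

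If $Q_{n,i}\subset\I^c_{n-1,\ell_i}$ and $Q_{n,j}\subset\I^c_{n-1,\ell_j}$, then by definition $t_{n-1}(x)=t_{n-1}(y)=1$, and we are done. Otherwise, say $Q_{n,i}\subset\I_{n-1,\ell_i}\subset B(x_{n-1,\ell_i},r_{n-1,\ell_i}/2)$. Since $B(x_{n-1,\ell_i},r_{n-1,\ell_i})\subset Q_{n-1,\ell_i}$ by \ref{list:cubesinballs}, the distance from $Q_{n,i}$ to $X\setminus Q_{n-1,\ell_i}$ is at least $r_{n-1,\ell_i}/2-\diam Q_{n,i}\geq r_{n-1,\ell_i}/4$ once $\alpha_{n-1}$ is sufficiently small. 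As $d(Q_{n,i},Q_{n,j})\leq d(x_{n,i},x_{n,j})\leq C r_{n,k}\lesssim\alpha_{n-1}^d r_{n-1,\ell_i}$, this distance exceeds $d(Q_{n,i},Q_{n,j})$, forcing $Q_{n,j}\subset Q_{n-1,\ell_i}$. Thus $\ell_i=\ell_j=:\ell$.

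Set $\delta_m:=d(x_{n-1,\ell},x_{n,m})$ for $m\in\{i,j\}$. A direct computation using Ahlfors $q$-regularity, via the same annular decomposition as in \eqref{eqn:waytocomputemeasureofcenter1}--\eqref{eqn:waytocomputemeasureofcenter}, shows that for any $Q_{n,m}\subset\I_{n-1,\ell}$,
\[
\frac{1}{\HH(Q_{n,m})}\int_{Q_{n,m}}d(x_{n-1,\ell},y)^{\rho}\,d\HH(y)\approx(\delta_m+r_{n,m})^{\rho},
\]
with constants depending only on $q,\rho$, and the cube data. When both $Q_{n,i},Q_{n,j}\subset\I_{n-1,\ell}$, the bounds $r_{n,i}\approx r_{n,j}$ and $|\delta_i-\delta_j|\leq d(x_{n,i},x_{n,j})\leq C r_{n,k}$ yield $\delta_i+r_{n,i}\approx\delta_j+r_{n,j}$, so the two averages are comparable, and multiplying by the common $A_{n-1,\ell}$ settles this subcase. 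In the mixed subcase $Q_{n,i}\subset\I_{n-1,\ell}$, $Q_{n,j}\subset\I^c_{n-1,\ell}$, the cube $Q_{n,j}$ contains a point at distance at least $r_{n-1,\ell}/2$ from $x_{n-1,\ell}$; combined with $d(x_{n,i},x_{n,j})\leq C r_{n,k}\ll r_{n-1,\ell}$, this forces $\delta_i\approx r_{n-1,\ell}$, so the average is $\approx r_{n-1,\ell}^{\rho}$ and the estimate $A_{n-1,\ell}\approx r_{n-1,\ell}^{-\rho}$ from \eqref{eqn:vakioidenAkertaluokka} yields $t_{n-1}(x)\approx 1=t_{n-1}(y)$.

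The principal obstacle is the mixed subcase: it pins down precisely why the normalization constant $A_{n-1,\ell}$ was chosen as it was. That normalization is exactly what makes the radial weight match the constant weight $1$ across the boundary of $\I_{n-1,\ell}$ up to a uniform factor, ruling out the blow-ups in the product $K_n$ that would otherwise obstruct the doubling property of the limiting measure.
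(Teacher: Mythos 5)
Your proof is correct in outline, but it takes a genuinely different route from the paper. The paper's proof is a short corollary of Lemma \ref{le:weightdoubling}: writing $t_{n-1}(x)=\theta_{n-1}(Q_{n,i})/\HH(Q_{n,i})$ via \eqref{eqn:connectionThetaMuHausd}, it notes that the two children have comparable $\HH$-measure (by \ref{list:cubesinballs}, \ref{list:comparableradius} and Ahlfors regularity, both are $\approx r_{n,k}^q$) and comparable $\theta_{n-1}$-measure (both sit inside a ball of radius $O(r_{n,k})$ about $x_{n,j}$ and contain balls of radius $\approx r_{n,k}$, so the uniform doubling of $\theta_{n-1}$ applies), and divides. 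All of your case analysis — same parent versus different parents, both cubes in $\I_{n-1,\ell}$, the mixed case across $\partial\I_{n-1,\ell}$ — is thereby absorbed into the already-proved doubling of $\theta_{n-1}$; in particular the ``principal obstacle'' you identify (the matching of $A_{n-1,\ell}d(\cdot,x_{n-1,\ell})^{\rho}$ with the constant weight $1$ across the boundary of $\I_{n-1,\ell}$) is exactly the content of estimates \eqref{eqn:estimatesforweight1}--\eqref{eqn:estimatesforweight2} inside the proof of Lemma \ref{le:weightdoubling}, so your direct computation partially re-proves that lemma. What your approach buys is a self-contained and more transparent explanation of the normalization $A_{n,j}$; what it costs is length and one technical caveat: you invoke \ref{list:centercubealpha} to claim $r_{n,k}\leq C_2\alpha_{n-1}^d r_{n-1,\ell_k}$, but \ref{list:centercubealpha} only controls the child containing the parent's center, and $Q_{n,k}$ need not be that child. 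The conclusion you actually need — that every child's radius is a small fixed fraction of its parent's radius once $n\geq n_0$ — is true, but it requires the contradiction argument via \ref{list:centercubealpha} and \ref{list:comparableradius} that appears in the proof of the first claim of Lemma \ref{le:biggerweightsconstant}; as written, that step of your argument is unjustified, though straightforwardly repairable (and it restricts the lemma to $n>n_0$, which is also implicitly the case in the paper's proof).
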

  \begin{proof}
    Let $x\in Q_{n,i}$, $y \in Q_{n,j}$ be such that $t_{n-1}(x)=\sup_{z \in B(x_{n,k}, 8 C_1 r_{n,k})} t_{n-1}(z)$ and  $t_{n-1}(y)=\inf_{z \in B(x_{n,k}, 8 C_1 r_{n,k})} t_{n-1}(z)$. 
    We use the fact that $\theta_n$ is doubling with \eqref{eqn:connectionThetaMuHausd} and properties 
    \ref{list:comparableradius} and \ref{list:cubesinballs} and compute 
    \begin{equation*}
      \begin{array}{l}
      \frac{1}{C C_3(8 C_1)^q} r_{n,k}^q \sup_{z \in B(x_{n,k}, 8 C_1 r_{n,k})} t_{n-1}(z) \\
      \leq \HH(Q_{n,i}) \sup_{z \in B(x_{n,k}, 8 C_1 r_{n,k})} t_{n-1}(z) = t_{n-1}(x) \HH(Q_{n,i})  \\
      =\theta_{n-1}(Q_{n,i}) \leq \theta_{n-1}(COV(B(x_{n,k},8 C_1 r_{n,k}),n) \\
      \leq \theta_{n-1}(B(x_{n,j},64 C_3(8 C_1)^2  C_1 r_{n,j})) \leq C_{\theta} \theta_{n-1}(B(x_{n,j}, r_{n,j})) \\
      \leq C_{\theta} \theta_{n-1}(Q_{n,j}) = C_{\theta} t_{n-1}(y) \HH(Q_{n,j})  \\
      = C_{\theta} \inf_{z \in B(x_{n,j}, 8 C_1 r_{n,k})} t_{n-1}(z) \HH(B(x_{n,j},C_1 r_{n,j})) \\
      \leq C_{\theta} C C_1^q C_3(8 C_1)^q r_{n,k}^q \inf_{z \in B(x_{n,k}, 8 C_1 r_{n,k})} t_{n-1}(z),
    \end{array}
    \end{equation*}
    where $C_{\theta}$ depends on the doubling constant of $\theta_n$, and is independent of $n$ by 
    Lemma \ref{le:weightdoubling}. 
  \end{proof}
  
  \begin{corollary}
    There exists a constant $C_8$ such that if $n$ is the first index for which $IN(B(x,r),n) \neq \emptyset$, then 
    \begin{equation*}
      \sup_{z\in B(x, 2 r)} t_{n-2}(z) \leq C_8 \inf_{z \in B(x,r)} t_{n-2}(z).
    \end{equation*}
    \label{cor:comparableweights}
  \end{corollary}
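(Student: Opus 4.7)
My plan is to apply Lemma \ref{le:weightscomparable} (at level $n-1$) directly, with a single auxiliary cube that dominates the whole ball $B(x,2r)$.

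First, I would take $Q_{n-1,k}$ to be the (unique, by property \ref{list:cubespartition}) level-$(n-1)$ cube containing $x$. Since $n$ is the first index with $IN(B(x,r),n)\neq\emptyset$, no level-$(n-1)$ cube can lie inside $B(x,r)$; in particular $Q_{n-1,k}\not\subset B(x,r)$, so $Q_{n-1,k}$ contains both $x$ and a point outside $B(x,r)$. Combined with property \ref{list:cubesinballs}, which gives $\diam Q_{n-1,k}\le 2C_1 r_{n-1,k}$, this forces
\[
r_{n-1,k} \ge \frac{r}{2C_1}.
\]

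Second, I would check that the dilated ball $B(x_{n-1,k},8C_1 r_{n-1,k})$ contains all of $B(x,2r)$. Indeed, for any $w\in B(x,2r)$,
\[
d(w,x_{n-1,k}) \le d(w,x) + d(x,x_{n-1,k}) \le 2r + C_1 r_{n-1,k} \le 4C_1 r_{n-1,k} + C_1 r_{n-1,k} = 5C_1 r_{n-1,k},
\]
using $r\le 2C_1 r_{n-1,k}$ from the previous step. So every $w\in B(x,2r)$, and a fortiori every $z\in B(x,r)$, lies in $B(x_{n-1,k},8C_1 r_{n-1,k})$.

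Third, since $t_{n-2}$ is constant on each level-$(n-1)$ cube and only finitely many such cubes meet $B(x,2r)$, the $\sup$ and $\inf$ in the corollary are attained, say at $w^*\in Q_{n-1,j}\cap B(x,2r)$ and $z^*\in Q_{n-1,i}\cap B(x,r)$. The points $w^*$ and $z^*$ themselves witness that $Q_{n-1,j}$ and $Q_{n-1,i}$ intersect $B(x_{n-1,k},8C_1 r_{n-1,k})$, so Lemma \ref{le:weightscomparable} (applied with the index shifted down by one) gives $t_{n-2}(w^*)\le C_7\, t_{n-2}(z^*)$, yielding the claim with $C_8:=C_7$.

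The only point that needs care is the elementary arithmetic check that $8C_1$ really is large enough to absorb both $B(x,r)$ and the offset $C_1 r_{n-1,k}$ of the cube center from $x$; this is exactly what the lower bound $r_{n-1,k}\ge r/(2C_1)$ from step one provides, and is the reason the constant $8C_1$ was chosen in Lemma \ref{le:weightscomparable}.
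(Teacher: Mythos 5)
Your proposal is correct and follows essentially the same route as the paper: both take the level-$(n-1)$ cube containing $x$, use the minimality of $n$ to get $r \le 2C_1 r_{n-1,k}$, observe that every level-$(n-1)$ cube meeting $B(x,2r)$ meets $B(x_{n-1,k}, 8C_1 r_{n-1,k})$, and then invoke Lemma \ref{le:weightscomparable}. Your version merely spells out the triangle-inequality arithmetic that the paper leaves implicit.
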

  \begin{proof}
    Let $x \in Q_{n-1,i}$.
    Since $Q_{n-1,i} \not \subset B(x,r)$, we have $2 C_1 r_{n-1,i} > r$. 
    Thus for any $Q_{n-1,k} \cap B(x, 2 r)\neq \emptyset$ we have
    $Q_{n-1,k} \cap B(x_{n-1,i}, 8 C_1 r_{n-1,i})\neq \emptyset$
    and the result follows from Lemma \ref{le:weightscomparable}.
  \end{proof}

\section{Proof of the Theorem \ref{thm:maintheorem}}

With the help of the previous lemmata and definitions we are now able to finish the proof of Theorem \ref{thm:maintheorem}. 
Let us consider the sequence of measures $\nu_n$, defined in \eqref{eqn:nuDefinition}. 
By the Banach-Alaoglu theorem we know that there exists a subsequence $\nu_{n_k}$ converging in the 
weak* sense to a measure $\nu$ (see, for example, \cite{AmbTil}).
It is actually true (as a result of how the mass is distributed from $\nu_{n-1}$ to $\nu$) 
that the whole sequence $\nu_n$ converges to the same limit,
  but we do not need this stronger result; any weak* limit will be good for our purposes.
  Furthermore, it is well known that if measures $\nu_n$ are doubling with the same constant $C$, then also 
  the weak* limit $\nu$
  is doubling. This is easy to see: Remember that weak* convergence is equivalent to 
  $\nu(U)\leq {\lim \inf}_{n} \nu_n(U)$ for any open set $U$
  and $\nu(K) \geq {\lim \sup}_{n} \nu_n(K)$ for any compact set $K$. 
  Thus $\nu(B(x,2r)) \leq {\lim \inf}_n \nu_n(B(x,2r)) \leq C^2 {\lim \sup}_n\nu_n(\bar{B}(x,\frac{1}{2}r)) \leq C^2 \nu(\bar{B}(x,\frac{1}{2}r)) \leq C^2 \nu(B(x,r))$.
  So, to prove that the measure $\nu$ is doubling,
  we show that the measures $\nu_n$ are doubling with the same constant.

  \begin{lemma}
    There exists a constant $C_{\nu}\geq 1$ such that all the measures $\nu_n, \; n\geq n_0$ are  
    $C_{\nu}$-doubling.
    \label{le:convergmeasUnivDoubl}
  \end{lemma}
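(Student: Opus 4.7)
The plan is to reduce the doubling of $\nu_n$ to the uniform doubling of $\theta_{m-1}$ from Lemma \ref{le:weightdoubling}, the Ahlfors regularity of $\HH$, and the local comparability of the weight supplied by Lemma \ref{le:weightscomparable} and Corollary \ref{cor:comparableweights}. Fix $n \geq n_0$ and a ball $B(x,r)$, and let $m$ be the least index with $IN(B(x,r),m) \ne \emptyset$, which exists because $\alpha_n \to 0$. I will assume $m \leq n+1$; the case $m > n+1$ reduces to this one because $K_n$ is constant on cubes at level $n+1$, so those cubes play the role otherwise played by level-$m$ cubes.

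First I would establish the telescoping identity
\[
\nu_n(Q) = K_{m-1}(x_Q)\,\HH(Q) = K_{m-2}(P(Q))\,\theta_{m-1}(Q)
\]
for every cube $Q$ at level $m$, where $P(Q)$ is its parent at level $m-1$. This follows by a short induction on $n$ using $\int_{Q'} t_i\, d\HH = \theta_i(Q') = \HH(Q')$ for cubes $Q'$ at level $i$ (a consequence of \eqref{eqn:thetaEqualsHausd} and \eqref{eqn:connectionThetaMuHausd}) together with the fact that $K_{m-2}$ is constant on each level-$(m-1)$ cube. Approximating $\nu_n(B(x,2r))$ from above and $\nu_n(B(x,r))$ from below via Lemma \ref{le:measofballbycubes}, and grouping level-$m$ cubes by their parents at level $m-1$, then yields
\[
\frac{\nu_n(B(x,2r))}{\nu_n(B(x,r))} \;\lesssim\; \frac{\max_P K_{m-2}(P)}{K_{m-2}(P_0)},
\]
where $P$ ranges over the boundedly many level-$(m-1)$ cubes meeting $B(x,2r)$ (a finite set by Lemma \ref{le:biggerweightsconstant}, property \ref{list:comparableradius}, and the doubling of $X$), all of radius $\sim r$, and $P_0$ is the parent of a cube $Q_0 \in IN(B(x,r),m)$ of radius $\sim r$. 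The suppressed constants absorb the uniform doubling constant of $\theta_{m-1}$ from Lemma \ref{le:weightdoubling} and the Ahlfors-regularity constants, via $\theta_{m-1}(P) = \HH(P) \sim r^q \sim \HH(Q_0) = \theta_{m-1}(Q_0)$.

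The heart of the argument is the uniform bound $\max_P K_{m-2}(P)/K_{m-2}(P_0) \leq C$ with $C$ independent of $n$ and $m$. Since $K_{m-2}$ is constant on each level-$(m-1)$ cube, this is a comparison between finitely many numbers; writing each value as $K_{m-2}(P_i) = \prod_{j=n_0}^{m-2} t_j(\mathrm{anc}_{j+1}(P_i))$, Lemma \ref{le:weightscomparable} controls the per-level variation of $t_j$ over the ancestor cluster by $C_7$. The main obstacle is bounding the cumulative product: one must show that at sufficiently coarse levels the ancestors of all the relevant $P_i$ collapse to a single common cube (so those factors cancel identically in the ratio) and that the number of remaining levels is uniformly bounded. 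This bookkeeping rests on property \ref{list:centercubealpha} together with $\alpha_n \to 0$, which force cubes to grow much faster than the fixed scale $r$ as $j$ decreases, and on the doubling of $X$, which bounds the cluster size at every scale. Once this step is carried out, combining with the uniform doubling of $\theta_{m-1}$ and the Ahlfors regularity of $\HH$ yields $\nu_n(B(x,2r)) \leq C_\nu \nu_n(B(x,r))$ with $C_\nu$ depending only on the structure constants of the $(\alpha_n)$-regular set and of $\HH$, hence independent of $n$.
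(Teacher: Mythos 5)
Your overall architecture matches the paper's: reduce to the first cube level $m$ fitting inside $B(x,r)$, write $\nu_n$ on level-$m$ cubes as $K_{m-2}\,\theta_{m-1}$ via \eqref{eqn:connectionThetaNu}, approximate the balls by $IN$ and $COV$, and invoke the uniform doubling of $\theta_{m-1}$ from Lemma \ref{le:weightdoubling}. The gap is in the step you yourself flag as the heart of the matter: the uniform bound on $\max_P K_{m-2}(P)/K_{m-2}(P_0)$. Your proposed mechanism --- that at sufficiently coarse levels the ancestors of the relevant level-$(m-1)$ cubes collapse to a single common cube, leaving only boundedly many levels each contributing a factor $C_7$ --- fails in general. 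If $x$ lies near a point of the boundary shared by cubes of every generation (already on $[0,1]$ with dyadic-type intervals, take $x$ near $1/2$), the cubes meeting $B(x,2r)$ have distinct ancestors at every level down to $n_0$: the cluster size stays bounded, say at $2$, but never becomes $1$, so the per-level factor $C_7$ from Lemma \ref{le:weightscomparable} accumulates over roughly $m-n_0$ levels and the resulting constant blows up as $r\to 0$. Neither property \ref{list:centercubealpha} nor the doubling of $X$ rules this out.

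The ingredient you are missing is the dichotomy built into the definition of $t_k$ through the sets $\I_{k,j}$ and $\I_{k,j}^c$: the weight $t_k$ equals $1$ identically outside $\bigcup_j \I_{k,j}$, and $\I_{k,j}\subset B(x_{k,j},\tfrac12 r_{k,j})$ sits at distance at least $\tfrac12 r_{k,j}$ from $X\setminus Q_{k,j}$. Since the first claim of Lemma \ref{le:biggerweightsconstant} gives $4r\le\tfrac12 r_{k,i}$ for every cube of level $k\le m-2$ meeting $B(x,2r)$, at each such level the ball $B(x,2r)$ either misses every $\I_{k,j}$ --- in which case $t_k\equiv 1$ on $B(x,2r)$ no matter how many level-$k$ cubes it straddles --- or it meets some $\I_{k,j}$, in which case $d(y,x_{k,j})<4r+\tfrac12 r_{k,j}\le r_{k,j}$ for all $y\in B(x,2r)$, so $B(x,2r)\subset B(x_{k,j},r_{k,j})\subset Q_{k,j}$ and hence lies inside a single cube of every coarser level, making all coarser weights literally constant on $B(x,2r)$. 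Consequently all but a bounded number of factors of $K_{m-2}$ are exactly constant on $B(x,2r)$ and cancel in the ratio; only the finest levels (handled by Corollary \ref{cor:comparableweights}) and the single level at which the ball is first absorbed into some $\I_{k,j}$ contribute a bounded factor. This is precisely why the construction perturbs $\HH$ only on the inner sets $\I_{n,j}$ and leaves the weight constant near cube boundaries; without exploiting that feature the uniform bound you need cannot be extracted from Lemma \ref{le:weightscomparable} alone.
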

  \begin{proof}
    Let $x \in X$ and $r>0$ be given.
    Let $n$ be the first index for which $Q_{n,j} \subset B(x,r)$ for some $j \in N_n$. 
    Since $\nu_m(Q_{n,j}) = \nu_{n-1}(Q_{n,j})$ for all $m\geq n-1$ and for all $j \in N_{n-1}$, it is enough
    to show that $\nu_{n-1}(COV(B(x, 2 r),n)) \leq C \nu_{n-1}(IN(B(x,r),n))$.
    
    By Corollary \ref{cor:comparableweights} we know that at each point $y$ in $B(x, 2 r)$ the weight $t_{n-2}(y)$ is 
    essentially constant  (and thus in $COV(B(x,2 r),n)$, as well).
    For the previous weights $t_k, k<n-2$, it is easy to see that the weight has to be constant at whole $B(x, 2 r)$: 
    
    Let us first assume that $\I_{n-2,j} \cap B(x,2 r) \neq \emptyset$.
    For all $y \in B(x, 2 r)$ we thus have $d(y,x_{n-2,j}) < 4 r + \frac{1}{2}  r_{n-2,j} \leq   r_{n-2,j}$, by Lemma \ref{le:biggerweightsconstant}.
    Thus $B(x,2 r) \subset Q_{n-2,j}$ and thus $t_{n-3}(x)=C$ for all $x \in B(x, 2 r)$. 
    If on the other hand $\I_{n-2,j} \cap B(x,2 r) = \emptyset$ for all $j \in N_{n-2}$, we know that the weight $t_{n-2}(z)=1$ in whole $B(x, 2 r)$ and argue the same way for $n-3$. Thus we see that all the
    preceding weights $t_k, \; k \leq n-3$ are constants in whole $B(x, 2 r)$.  

    With this in mind we can use \eqref{eqn:connectionThetaNu} and compute
    \begin{equation*}
      \begin{array}{l}
	\nu_{n-1}(COV(B(x, 2 r),n)) \leq \prod_{i=0}^{n-2} \sup_{z \in B(x,2 r)}t_{i}(z) \theta_{n-1}(COV(B(x,2 r),n))\\
	\leq C_8 \prod_{i=0}^{n-2} \inf_{z \in B(x, r)}t_{i}(z) \theta_{n-1}(COV(B(x,2 r),n)) \\ 
	\leq C_8 C_{\theta_{n-1}}^2 \prod_{i=0}^{n-2} \inf_{z \in B(x, r)}t_{i}(z) \theta_{n-1}(IN(B(x,r),n)) \\ 
	\leq C_8 C_{\theta_{n-1}}^2  \nu_{n-1}(IN(B(x,r),n)),
      \end{array}
    \end{equation*}
    where the constants $C_8$ and $C_{\theta_{n-1}}$ are from Corollaries \ref{cor:comparableweights} and 
     \ref{cor:measofballbyINOUT}. Remember that $C_{\theta_{n-1}}$ only depends on the doubling 
     constant of the measures $\theta_{n-1}$ and
    by Lemma \ref{le:weightdoubling} these are uniformly bounded by $C_5$.
  \end{proof}


Let $\nu$ be the weak* limit of measures $\nu_n$, with $0<\rho$ such 
that $\sum_{n=0}^{\infty} \alpha_n^{(q + \rho)d} < \infty$.
We can now finish the proof of the claim \ref{list:Enotthin}.   
Note that the boundaries $\partial Q_{n,j}$
are upper porous: If $x \in \partial Q_{n,j}$, take a sequence 
$(2 C_1 r_{m,i})_{m=n}^{\infty}$ for $Q_{m,i}$ such that $x \in \partial Q_{m,i}$. For this sequence 
$B(x_{m,i},  r_{m,i}) \subset B(x,2 C_1 r_{m,i}) \setminus \partial Q_{n,j}$.
It is well known that upper porosity implies that the set is thin. 
Thus, $\partial Q_{n,j}$ is of measure zero for doubling measures for all $n \in \N, \; j\in N_n$. In particular,
this guarantees that $\lim_j \nu_j(E_n) = \nu(E_n)$ for all $n\in\N$.
Now for each cube $Q_{n,j}$, the measure of  the 
removed center cube 
$Q_{n+1,i_j}$ can be approximated  by 
\begin{equation}
  \begin{array}{l}
    \nu(Q_{n+1,i_j}) = \nu_n(Q_{n+1,i_j}) = K_{n-1}(x_{n+1,i_j}) \theta_n(Q_{n+1,i_j}) \\
  \leq  K_{n-1}(x_{n+1,i_j})\int_{B(x_{n+1,i_j}, C_1 r_{n+1,i_j})} A_{n,j} d(x,x_{n,j})^{\rho} d\HH(x)  \\
  \leq K_{n-1}(x_{n+1,i_j})\int_{B(x_{n+1,i_j}, C_1 r_{n+1,i_j})} A_{n,j} (2 C_1 r_{n+1,i_j})^{\rho} d\HH(x)   \\
  \leq K_{n-1}(x_{n+1,i_j})C C_4  2^{\rho} C_1^{\rho +q} r_{n,j}^{-\rho}
  r_{n+1,i_j}^{q + \rho}   \\
  \leq K_{n-1}(x_{n+1,i_j}) C_E \alpha_n^{(q + \rho) d} r_{n,j}^{q}\\
  \leq K_{n-1}(x_{n+1,i_j}) C_E \alpha_n^{(q + \rho) d} C \theta_n(Q_{n,j})\\
  = C_E C\alpha_n^{(q + \rho) d}  \nu_n(Q_{n,j}) = C_E C\alpha_n^{(q + \rho) d}  \nu(Q_{n,j}),
\end{array}
\label{eqn:Qn+1AprByQn}
\end{equation}
where $C_E=C C_4  2^{\rho} C_1^{\rho +q} C_2^{\rho + q} $,
and the inequalities and equalities follow from the above reasoning,
\eqref{eqn:connectionThetaNu}, \eqref{eqn:thetaEqualsHausd}, \ref{list:centercubealpha}, \eqref{eqn:ahlforsdefinition} and 
\eqref{eqn:vakioidenAkertaluokka}.
  
  With this in mind we can approximate in each $Q_{1,k}$
\begin{equation*}
  \begin{array}[c]{l}
      \nu(E_{n}  \cap E_{n-1} \cap Q_{1,k}) = \nu(\bigcup_{Q_{n,j} \subset E_{n-1} \cap Q_{1,k}}(Q_{n,j} \setminus Q_{n+1,i_j}) \\
    = \sum_{Q_{n,j} \subset E_{n-1} \cap Q_{1,k}} \left( \nu(Q_{n,j}) - \nu(Q_{n+1,i_j})\right) \\
    \geq \left( 1 - C_E C \alpha_n^{(q + \rho) d} \right) \nu(\bigcup_{Q_{n,j} \subset E_{n-1}\cap Q_{1,k}} Q_{n,j}) \\
    =  \left( 1 - C_E C  \alpha_n^{(q + \rho) d} \right) \nu(E_{n-1} \cap Q_{1,k}).
  \end{array}
  \label{eqn:approxofEnbyEn-1}
\end{equation*}
When we apply this for all $n \geq n_1 >n_0$, where $n_1$ is such that 
$ 1 - C_E C \alpha_n^{(q + \rho) d} > 0$, for all $n\geq n_1$ we get 
\begin{equation}
    \begin{array}[c]{l}
     \nu(E \cap Q_{1,k}) = \lim_{i \to \infty} \nu\left( \cap_{j=1}^{i} E_{j} \cap Q_{1,k} \right) \\
        \geq  \prod_{j=n_1}^{\infty}\left( 1 - C_E C \alpha_{j}^{(q + \rho) d} \right) 
        \nu(\cap_{j=1}^{n_1} E_{j} \cap Q_{1,k}) > 0, 
  \label{eqn:finalstep1}
  \end{array}
\end{equation}
because $\sum_{j=1}^{\infty} \alpha_j^{(q+\rho) d} < \infty$ .

The claim \ref{list:Enotfat} can be proved with similar calculations as above. 
If $(\alpha_n) \notin \ell^{0}$, we choose $-q < \rho < 0$ such that 
$\sum_{n=0}^{\infty} \alpha_n^{\frac{q + \rho}{d}} = \infty$, and this yields
$\nu(E)=0$ for the limiting measure.  
\qed


\section{Measures on the $(\alpha_n)$ -regular set $E$}
We keep the notation from the previous sections and consider 
the measure $\nu$ (which was constructed in the proof of Theorem \ref{thm:maintheorem}) as a measure 
on the $(\alpha_n)$ -regular set $E$. By this we mean
restricting the measure $\nu$ and the metric $d$ to the set $E$ and considering $(E,\nu\rst{E},d\rst{E})$ as 
a metric measure space of its own. We are all the time dealing with
the case $(\alpha_n) \in \ell^{\infty}$, otherwise the set would be thin (and 
thus the restricted measure trivial).

Two closely related concepts are the measure density condition (see, for example, \cite{HajKosTuo}) and the
plumpness (see \cite{HytKai} and references therein) of a set. 
If either of these two conditions were satisfied, we could easily conclude 
that $\nu$ is doubling as a measure on the set $E$. 
This is not the case, since the set $E$ is obviously not plump, and even the measure density 
property migh not be satisfied, as Example \ref{ex:nondoubling} shows.
Let us first record the following corollary of the proof of Theorem \ref{thm:maintheorem} which 
quantifies the approximative measure density. The summability condition below is simply to
guarantee that our measure is not trivial. Thus, we have a collection of measures $\nu$ that we are interested in:
one for each $\rho$ such that the sum $\sum_{i=0}^{\infty} \alpha_{j}^{\left( q + \rho \right) d }$ is finite.

\begin{corollary}
    Suppose that $X$ is a bounded, uniformly perfect metric space, $E \subset X$ is an $(\alpha_n)$-regular set
    and $\nu$ is a doubling measure defined as in the proof of 
    Theorem \ref{thm:maintheorem} with $\rho$ such that
    $\sum_{i=0}^{\infty} \alpha_{j}^{\left( q + \rho \right) d } < \infty$. Then there exists 
    a constant $c>0$ such that $\nu(Q_{n,k} \cap E) \geq c \nu(Q_{n,k})$
    for all $Q_{n,k} \subset \cap_{i=1}^{n-1} E_{i}$.
    \label{co:moreThanConstantLeft}
\end{corollary}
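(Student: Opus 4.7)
The plan is to reuse the telescoping estimate from the end of the proof of Theorem \ref{thm:maintheorem}, but to start the iteration at the level $n$ of the cube $Q_{n,k}$ rather than at level $1$. First fix $n_1 \geq n_0$ so that $C_E C \alpha_m^{(q+\rho)d} < 1$ for every $m \geq n_1$, which is possible because $\alpha_m \to 0$. The summability hypothesis $\sum_j \alpha_j^{(q+\rho)d} < \infty$ then guarantees that
$$c_1 := \prod_{m=n_1}^{\infty}\bigl(1 - C_E C \alpha_m^{(q+\rho)d}\bigr) > 0.$$

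For $n \geq n_1$, the assumption $Q_{n,k} \subset \bigcap_{i=1}^{n-1} E_i$ forces $E \cap Q_{n,k} = \bigl(\bigcap_{m \geq n} E_m\bigr) \cap Q_{n,k}$, so applying estimate \eqref{eqn:Qn+1AprByQn} to every level-$m$ subcube of $Q_{n,k}$ and iterating exactly as in \eqref{eqn:finalstep1} yields
$$\nu(E \cap Q_{n,k}) \geq \prod_{m=n}^{\infty}\bigl(1 - C_E C \alpha_m^{(q+\rho)d}\bigr) \nu(Q_{n,k}) \geq c_1 \nu(Q_{n,k}).$$

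For $n < n_1$ the telescoping product breaks down (the factors with $m < n_1$ need not be positive), so I plan to bridge the finitely many problematic levels using doubling of $\nu$. Using the non-emptiness clause of \ref{list:centercubealpha} at each intermediate level, I select a nested chain $Q_{n,k} \supsetneq Q_{n+1,j_{n+1}} \supsetneq \cdots \supsetneq Q_{n_1,j_{n_1}}$ in which each cube is a \emph{non-central} child of its predecessor; by construction $Q_{n_1,j_{n_1}} \subset \bigcap_{i=1}^{n_1-1} E_i$, and the previous case gives $\nu(E \cap Q_{n,k}) \geq \nu(E \cap Q_{n_1,j_{n_1}}) \geq c_1 \nu(Q_{n_1,j_{n_1}})$. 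To compare $\nu(Q_{n_1,j_{n_1}})$ with $\nu(Q_{n,k})$, I combine the iterated lower radius bound in \ref{list:centercubealpha} (for central descents) with \ref{list:comparableradius} (to pass through each non-central step, since the latter is not directly controlled by \ref{list:centercubealpha}) to bound $r_{n,k}/r_{n_1,j_{n_1}}$ by a constant $M$ depending only on $C_2$, $C_3$, $d$, and $\alpha_n,\ldots,\alpha_{n_1-1}$. Property \ref{list:cubesinballs} gives the inclusions $B(x_{n_1,j_{n_1}},r_{n_1,j_{n_1}}) \subset Q_{n_1,j_{n_1}} \subset Q_{n,k} \subset B(x_{n_1,j_{n_1}},2C_1 r_{n,k})$, and then doubling of $\nu$ turns the radius comparison into a uniform measure comparison $\nu(Q_{n,k}) \leq K\, \nu(Q_{n_1,j_{n_1}})$. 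Taking $c := c_1/K$ handles both cases.

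The main obstacle I anticipate is precisely this low-level case. The proof of Theorem \ref{thm:maintheorem} only produced $\nu(E \cap Q_{1,k})>0$ without an explicit ratio against $\nu(Q_{1,k})$, so the quantitative comparison $\nu(Q_{n_1,j_{n_1}}) \geq K^{-1} \nu(Q_{n,k})$ requires genuine extra input; the technical subtlety is that the non-central descents force one to invoke \ref{list:comparableradius} with a threshold $T \sim \alpha_m^{-1/d}$, so the constant $C_3(T)$ is large, but because this only happens at the finitely many levels $m<n_1$ it is absorbed harmlessly into $K$.
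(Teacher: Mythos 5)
Your argument is correct, and its first half coincides with the paper's: both iterate the estimate \eqref{eqn:Qn+1AprByQn} from level $\max(n,n_1)$ onward to obtain $\nu(E\cap Q_{n,k})\geq c_1\,\nu\bigl(\cap_{j=1}^{n_1}E_j\cap Q_{n,k}\bigr)$ with $c_1=\prod_{m=n_1}^{\infty}\bigl(1-C_EC\alpha_m^{(q+\rho)d}\bigr)>0$, which settles the case $n>n_1$ at once. Where you genuinely diverge is the finitely many low levels $n\leq n_1$. The paper dispatches these softly: since $X$ is bounded there are only finitely many cubes $Q_{n,j}$ with $n\leq n_1$, each admissible one has $\nu(E\cap Q_{n,j})>0$ by the first step, and so the minimum of the ratios $\nu(E\cap Q_{n,j})/\nu(Q_{n,j})$ over this finite collection is a positive constant $c_2$; the corollary follows with $c=\min\{c_1,c_2\}$. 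You instead descend through a chain of non-central children to an admissible cube $Q_{n_1,j_{n_1}}\subset\cap_{i=1}^{n_1-1}E_i$, apply the first case there, and transfer the bound back up to $Q_{n,k}$ via the radius estimates of \ref{list:centercubealpha} and \ref{list:comparableradius} combined with the doubling of $\nu$ on $X$ (Lemma \ref{le:convergmeasUnivDoubl}). Both routes are valid. The paper's is two lines and entirely non-quantitative; yours is longer but yields an explicit constant and, for this sub-case, does not actually invoke the boundedness of $X$. Your concern about $C_3(T)$ with $T\sim\alpha_m^{-1/d}$ being large is resolved exactly as you say: only the levels $m<n_1$ occur, so these finitely many factors are absorbed into the constant $K$.
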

\begin{proof}
    Exactly as in \eqref{eqn:finalstep1}, we see that
    \begin{equation*}
        \nu(E \cap Q_{n,k}) = \lim_{i \to \infty} \nu\left( \cap_{j=1}^{i} E_{j} \cap Q_{n,k} \right) 
        \geq c_1 \nu(\cap_{j=1}^{n_1} E_{j} \cap Q_{n,k}), 
    \end{equation*}
    for any $Q_{n,j} \subset \cap_{i=1}^{n-1} E_i$
    where $c_1 = \prod_{j=n_1}^{\infty}\left( 1 - C_E C \alpha_{j}^{(q + \rho) d} \right)$.
    If $n > n_1$, we are done, since in this case $  Q_{n,k} \subset \cap_{j=1}^{n_1} E_{j}$. If $n \leq n_1$, 
    since we have only finite number of cubes $Q_{n,j}, n\leq n_1$ (we assume that our space is bounded), 
    we have $\min_{j, n\leq n_1} \left\{ \nu(E\cap Q_{n,j} )/\nu(Q_{n,j})\right\} = c_2> 0$. 
    These together give a lower bound to how much 
    we have at most removed from any cube. 
\end{proof}

With the above corollary we see that, if we assume certain weak quantitative plumpness 
from the approximating sets $\cap_{i=1}^n E_i$,
then the measures as in the previous corollary are doubling as measures on the set $E$. 
We state this sufficient requirement in the next definition. Compared to plumpness in \cite{HytKai}, we are 
basically only checking that $\cap_{i=1}^n E_i$ looks plump at the very coarse scale of radii.
\begin{definition}
    An $(\alpha_n)$-regular set $E \subset X$ is \emph{relatively plump} if there exists constant $b>0$ such that following 
    condition is satisfied:
    For any $x \in E$ and $R>0$ and for the first $n \in \N$
    for which $\exists Q_{n,j} \subset B(x,R) \cap \cap_{i=1}^{n-1} E_{i}$, there exists 
    $y \in \cap_{i=1}^{n-1} E_i$ such that $B(y, b R) \subset B(x, R) \cap \cap_{i=1}^{n-1} E_{i}$.
\end{definition}
Th key point is that $r_{n,j}$ is not necessarily comparable to $R$ in the above definition when $\alpha_n \to 0$. 
As Example \ref{ex:nondoubling} shows, if this relative plumpness is not satisfied, 
the doubling measures restricted to 
the set $E$ might not be doubling (as measures on $E$).
\begin{proposition}
    Suppose $X$ is a bounded, uniformly perfect metric space 
    and $E \subset X$ is a relatively plump $(\alpha_n)$-regular set. Then
    the measures $\nu$ defined as in the proof of 
    Theorem \ref{thm:maintheorem} with $\rho$ such that
    $\sum_{i=0}^{\infty} \alpha_{j}^{\left( q + \rho \right) d } < \infty$
    are doubling as measures on the set $E$.
        \label{le:measureDoublingOfE}
\end{proposition}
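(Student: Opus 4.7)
My plan is to reduce the doubling of $\nu$ as a measure on $E$ to the uniform lower bound $\nu(B(x,R)\cap E) \geq c\, \nu(B(x,R))$ for every $x \in E$ and $R > 0$. Once such a bound is in hand, the doubling of $\nu$ as a measure on $X$ (which follows from Lemma~\ref{le:convergmeasUnivDoubl} and the weak${}^*$ limit argument used in the proof of Theorem~\ref{thm:maintheorem}) immediately yields
\[
\nu(B(x,2R)\cap E) \leq \nu(B(x,2R)) \leq C_\nu\, \nu(B(x,R)) \leq \frac{C_\nu}{c}\, \nu(B(x,R)\cap E),
\]
which is the required doubling of $\nu\rst{E}$.

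To establish the lower bound, I apply the relative plumpness hypothesis to the pair $(x,R)$, producing the first level $n$ with a good level-$n$ cube in $B(x,R)$ together with a point $y \in \cap_{i=1}^{n-1}E_i$ satisfying $B(y,bR) \subset B(x,R) \cap \cap_{i=1}^{n-1}E_i$. Since $B(y,bR) \subset B(x,R) \subset B(y,2R)$ and $b$ is a fixed constant, the doubling of $\nu$ on $X$ gives $\nu(B(y,bR)) \geq c_1\, \nu(B(x,R))$, so it suffices to find a uniform $c_2 > 0$ with $\nu(B(y,bR)\cap E) \geq c_2\, \nu(B(y,bR))$. For this I choose $m \geq n$ to be the first level for which $IN(B(y,bR/2),m) \neq \emptyset$; such $m$ exists because $\alpha_n \to 0$ forces cube radii to vanish. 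Lemma~\ref{le:measofballbycubes} then produces a level-$m$ cube $Q^* \subset B(y,bR/2)$ whose inscribed ball has radius $c_9\, bR/2$. Granting that $Q^*$ is good at its own level, i.e.\ $Q^* \subset \cap_{i=1}^{m-1}E_i$, Corollary~\ref{co:moreThanConstantLeft} delivers $\nu(Q^*\cap E) \geq c\, \nu(Q^*)$, and the doubling of $\nu$ transports the inscribed ball of $Q^*$ up to a fixed fraction of $\nu(B(y,bR))$, completing the reduction.

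The main obstacle, and the point at which relative plumpness plays its decisive role, is verifying that $Q^*$ actually lies in $\cap_{i=1}^{m-1}E_i$. Its level-$n$ ancestor is automatically good, since $Q^* \subset \cap_{i=1}^{n-1}E_i$, but I still have to rule out that some intermediate ancestor at a level $j \in \{n+1,\ldots,m-1\}$ coincides with a removed center sub-cube, which would force $Q^*\cap E = \emptyset$ because center sub-cubes are disjoint from $E$ by construction. The argument here combines the minimality of $m$ with properties~\ref{list:cubesinballs}, \ref{list:centercubealpha}, and~\ref{list:comparableradius}: any center-sub-cube ancestor at level $j$ would have radius bounded above by $C_2\alpha_{j-1}^d\, r_{j-1,\cdot}$ while necessarily containing the inscribed ball of radius $c_9 bR/2$, forcing $r_{j-1,\cdot}$ to be enormous and ultimately contradicting the fact that no level-$(m-1)$ cube fits in $B(y,bR/2)$. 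Without the plumpness hypothesis the ball $B(y,bR)$ itself could be swallowed entirely by such a center sub-cube, in which case $B(y,bR) \cap E$ would be empty; this is exactly the pathology made explicit by Example~\ref{ex:nondoubling}, and the reason the relative plumpness condition is indispensable.
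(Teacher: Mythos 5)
Your global reduction (a uniform lower bound $\nu(B(x,R)\cap E)\geq c\,\nu(B(x,R))$ combined with the doubling of $\nu$ on $X$) and your first use of relative plumpness agree with the paper, but the heart of your argument rests on a misreading of Lemma \ref{le:measofballbycubes}, and this leaves a genuine gap. That lemma asserts $B(x_{m,j},c_9 r)\subset IN(B(y,r),m)$, where $IN(B(y,r),m)$ is the \emph{union} of all level-$m$ cubes contained in $B(y,r)$; it does not assert that the single cube $Q_{m,j}$ contains a ball of radius $c_9 r$. At the first level $m$ for which $IN(B(y,bR/2),m)\neq\emptyset$, the level-$(m-1)$ cube containing $y$ is not contained in $B(y,bR/2)$ and therefore has radius at least $bR/(4C_1)$, while its level-$m$ children can all have radii of order $\alpha_{m-1}^{d}$ times that, hence much smaller than $R$ when $\alpha_{m-1}$ is small --- exactly the regime this paper cares about. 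So there need not exist any single cube $Q^*$ with an inscribed ball of radius $c_9\,bR/2$, and both steps that depend on it collapse: the estimate $\nu(Q^*)\gtrsim\nu(B(y,bR))$ fails (a tiny cube can carry an arbitrarily small fraction of the mass), and your ancestor-exclusion argument fails because a putative center-cube ancestor of $Q^*$ at a level $j\in\{n+1,\dots,m\}$ is no longer forced to contain a ball of radius comparable to $R$, so no contradiction with the minimality of $m$ arises. Worse, the exclusion you aim for is false in general: $B(y,bR/2)$ may contain entire removed center cubes of levels between $n+1$ and $m$, so some level-$m$ cubes of $IN(B(y,bR/2),m)$ genuinely miss $E$; at best a definite \emph{fraction} of the mass survives, and establishing that would require a separate estimate in the spirit of \eqref{eqn:Qn+1AprByQn}, which you have not supplied.

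The paper avoids the descent to level $m$ entirely by staying at the level $n$ furnished by relative plumpness and splitting into two cases. If some level-$n$ cube is contained in $B(y,bR)$, then \emph{every} cube of $IN(B(y,bR),n)$ lies in $B(y,bR)\subset\cap_{i=1}^{n-1}E_i$, so Corollary \ref{co:moreThanConstantLeft} applies to each of them with no ancestor issue, and Corollary \ref{cor:measofballbyINOUT} converts $\nu(IN(B(y,bR),n))$ into a fixed fraction of $\nu(B(y,bR))$, hence of $\nu(B(x,2R))$. If no level-$n$ cube fits in $B(y,bR)$, then the cube containing $y$ has radius at least $bR/(2C_1)$, and property \ref{list:comparableradius} forces the good cube $Q_{n,j}\subset B(x,R)\cap\cap_{i=1}^{n-1}E_i$ guaranteed by the choice of $n$ to have radius comparable to $R$; that single cube, via Corollary \ref{co:moreThanConstantLeft} and the doubling of $\nu$ on $X$, already carries a definite fraction of $\nu(B(x,2R))$. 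I recommend reworking your argument along these lines; as written, the lower bound is not established.
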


\begin{proof}
    Let $x \in E$, $R>0$ and let $n$ be the smallest index for which 
    $\exists Q_{n,j} \subset B(x,R) \cap \cap_{i=1}^{n-1} E_{i}$.
    By assumption there now exists $y \in \cap_{i=1}^{n-1} E_{i}$ such that 
    $B(y,b R)\subset B(x,R) \cap \cap_{i=1}^{n-1} E_{i}$.
    If $Q_{n,i} \subset B(y,b R)$ for some $i$, by Corollary \ref{co:moreThanConstantLeft} and 
    Corollary \ref{cor:measofballbyINOUT},
    we have 
    \begin{align*}
        \nu(B(x,R)\cap E) \geq \nu(B(y,b R) \cap E)\geq \nu(IN(B(y,b R),n)\cap E) \\ 
        \geq c \nu(IN(B(y,b R),n)) 
        \geq \frac{c}{C_{\nu}} \nu(B(y,b R)) \geq \frac{c}{C_{\nu} C_b} \nu(B(x, 2 R)),
    \end{align*}
    where $C_b$ depends on $b$ and the doubling constant of $\nu$. Recall that according to the proof of 
    Theorem \ref{thm:maintheorem}, the measure $\nu$ is doubling on $X$.
    If on the other hand $y \in Q_{n,i} \not\subset B(y, b R)$, it follows that $2 C_1 r_{n,i} \geq b R$.
    Also by the choice of $n$, there exists $Q_{n,j} \subset B(x,R)$, and thus 
    \begin{equation*}
        Q_{n,j} \subset B(x,R) \subset B(y, 2 R) \subset B(x_{n,i}, C_1 r_{n,i} + 2 R) 
        \subset B\left(x_{n,i},\frac{(C_1 +2 ) 2 C_1}{b} r_{n,i}\right).
    \end{equation*}
    By \ref{list:comparableradius} we thus have
    $C_3\left(\frac{(C_1 +2 ) 2 C_1}{b}\right) r_{n,j} \geq r_{n,i} \geq
    \frac{b}{2 C_1} R  $.
    Since $Q_{n,j} \subset B(x,R) \cap \cap_{i=1}^{n-1} E_{i}$ by Corollary \ref{co:moreThanConstantLeft}, we have
    \begin{equation*}
      \nu(B(x,R)\cap E) \geq \nu(Q_{n,j} \cap E) \geq c \nu(Q_{n,j}) \geq \frac{c}{C_{b}} \nu(B(x,2 R)),
    \end{equation*}
    where $C_b$ only depends on $b$, $C_1$, $C_2$, $C_3$ and the doubling constant of $\nu$.
\end{proof}

The relative plumpness above is satisfied for example in the case of the Sierpinski carpets $S_{\ii}$, 
which were mentioned earlier. 
But by distorting the construction of $S_{\ii}$ slightly, we end up with an $(\alpha_n)$-regular set that is not 
relatively plump.
The next example shows that even the Lebesgue measure does not need to be doubling as a measure on the set $E$.
\begin{example}
    Let us start with the Sierpinski carpet $S_{\ii}$ defined by a sequence $\ii=(\alpha_n) $ such that 
    $\sum_{n=1}^{\infty} \alpha_n^p < 0, \forall p>0$ (here $\alpha_n$ are reciprocals of odd integers).
    Notice now that by Theorem \ref{thm:maintheorem} all the 
    doubling measures on the space $X$ give positive measure to the set $E$. (Notice that if we set $\rho = 0$ when 
    constructing the measure $\nu$, we end up with the original Lebesgue measure.) Let us now distort the cubes slightly.
    From each level, let us choose one of the removed cubes $\hat{Q}_{n,j}$ and the cube $Q_{n,k}$ right next to
    it, as in Figure \ref{pic:alteredconstruction}. By changing the radius $r_{n,j}$ to $\frac{1}{3} r_{n,j}$, 
    if necessary, and $C_1$ to $3 C_1$, we can now have an $(\alpha_n)$-regular set $E$
    such that the cube $Q_{n+1,i} \subset Q_{n,k}$.
\begin{figure}[h]
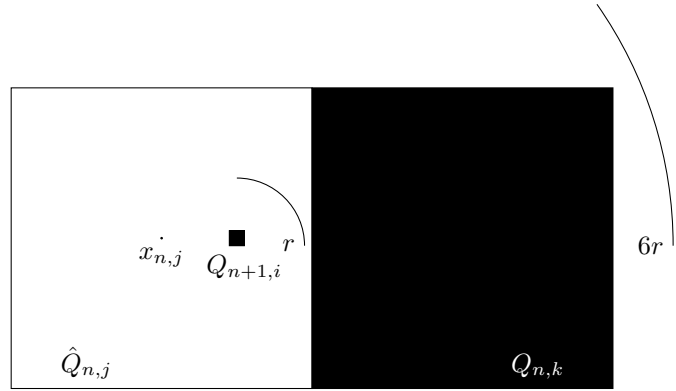

\tikzpicture
\filldraw (-2,0) circle (.01);
\node at (-2,0)[below] {$x_{n,j}$};
\draw (-4,-2) rectangle (0,2) ;
\filldraw (-1.1,-.1) rectangle (-.9,.1) ;
\draw (-.1,-.1) arc (0:90:.9);
\node at (-.1,-.1) [left] {$r$};
\draw (4.8,-.1) arc (0:35:5.6);
\node at (4.8,-.1) [left] {$6 r$};
\node at (-.9,-.1)[below] {$Q_{n+1,i}$};
\node at (-3,-2)[above] {$\hat{Q}_{n,j}$};
\filldraw (0,-2) rectangle (4,2) ;
\node at (3,-2)[above, text=white] {$Q_{n,k}$};
\endtikzpicture
\caption{Measure not doubling on $E$}
\label{pic:alteredconstruction}
\end{figure}
    It now follows that the radius $r_{n+1,i}$ of $Q_{n+1,i}$ is comparable to $ \alpha_n r_{n,j}$, 
    but the distance of $Q_{n+1,i}$ to the other components of $E_n$
    is comparable to $r_{n,j}$. If $\nu$ is now any of the measures constructed in the previous sections, we see  
    that we can have a ball $B(x,r)$ centered at $E\cap Q_{n+1,i}$ (see Figure \ref{pic:alteredconstruction})
    so that $ \nu(B(x,r) \cap E) \leq \nu(Q_{n+1,i})$,
    but $\nu(B(x,6 r) \cap E) \geq c \nu(Q_{n,k})$ by Corollary \ref{co:moreThanConstantLeft}.
    It now follows with the doubling property of $\nu$ on $[0,1]^2$ that there exist constants 
    $\tilde{C}>0$, $\hat{C}>0$ and $\lambda>0$ such that
    \begin{equation*}
        \frac{\nu(B(x,r)\cap E)}{\nu(B(x,6 r) \cap E)} \leq \frac{\nu(B(x_{n+1,i},C_1 r_{n+1,i}))}{\nu(B(x_{n+1,i},\tilde{C} r_{n,j}))} \leq \hat{C} \alpha_n^{\lambda}.   
    \end{equation*}
    Since $\alpha_n \to 0$ as $n \to \infty$ 
    and we can do this distortion at all levels,
    the measure $\nu$ restricted to the set $E$ cannot be doubling as a measure on the set $E$.
    \label{ex:nondoubling}

\end{example}
\subsection*{Acknowledgements}
    I would like to thank my advisor Ville Suomala for his comments and guidance.
    I am also most grateful for being supported by the Vilho, Yrj\"o and Kalle V\"ais\"al\"a Foundation. 

\bibliographystyle{alpha}

\bibliography{cubes}

\end{document}